\documentclass[12pt]{amsart}

\usepackage[margin=1in]{geometry}  
\usepackage{graphicx}              

\usepackage{amssymb}
\usepackage{esint}

\newtheorem{thm}{Theorem}[section]
\newtheorem{prop}[thm]{Proposition}
\newtheorem{lem}[thm]{Lemma}
\newtheorem{cor}[thm]{Corollary}

\theoremstyle{definition}
\newtheorem{defi}[thm]{Definition}

\theoremstyle{remark}
\newtheorem{rem}[thm]{Remark}

\theoremstyle{example}
\newtheorem{ex}[thm]{Example}

\numberwithin{equation}{section}

\newcommand{\TDer}{\nabla_\tau} 
\newcommand{\RDer}{\frac{\partial}{\partial r}} 

\newcommand{\FBA}{\partial\{u_1>u_2\}} 

\newcommand{\Tprj}{T_{p,r}u_j} 
\newcommand{\Torj}{T_{0,r}u_j} 

\newcommand{\Fpr}{W(\{u_j\},p,r)} 
\newcommand{\EFpr}{\frac{1}{r^{d+2}}\int_{B_r(p)}\sum(|\nabla u_j|^2+2 f_ju_j)-\frac{2}{r^{d+3}}\int_{\partial B_r(p)}\sum u_j^2} 
\newcommand{\For}{W(\{u_j\},0,r)}


\begin{document}

\title{On the Multiple Membranes Problem}

\author{Ovidiu Savin}
\address{Department of Mathematics, Columbia University}
\email{savin@math.columbia.edu}
\author{Hui Yu}
\address{Department of Mathematics, Columbia University}
\email{huiyu@math.columbia.edu}

\begin{abstract}
We establish the optimal regularity for solutions to the multiple membranes problem, and perform a blow-up analysis at points on the free boundary with the highest multiplicity. This leads to a complete classification of blow-up profiles in the plane. The main technical tool is a Weiss-type monotonicity formula. 
\end{abstract}

 \maketitle

\tableofcontents

\section{Introduction}
For a positive integer $N$, the $N$-membranes problem is the study of the equilibrium position of $N$ elastic membranes subject to forcing and boundary conditions.  If heights of the membranes are described  by real functions $u_1,u_2,\dots,u_N$ over a domain $\Omega$ inside $\mathbb{R}^d$ with the force acting on $u_j$ given by $f_j$, then in the most basic model $u_j$'s minimise the following energy \begin{equation*}
\mathcal{F}(\{u_j\})=\int_{\Omega}\sum( |\nabla u_j|^2+2 f_ju_j)dx
\end{equation*}  among the convex class of functions satisfying $u_1\ge u_2\ge \cdots\ge u_N$ as well as  boundary conditions.  Here the ordering is a consequence of the non-penetrating property of the membranes. The problem becomes particularly interesting if $f_1>f_2>\cdots>f_N$ since then the membranes tend to contact each other in certain regions, giving rise to free boundaries. 

Under reasonable assumptions on $f_j$'s and compatible boundary conditions, it is not too difficult to produce minimisers by the direct method, say, in the space of Sobolev functions. Uniqueness also follows rather easily from convexity of the functional \cite{CV}. The interesting problems are then the regularity of solutions $\{u_j\}$ and of the free boundaries $\partial\{u_j>u_{j+1}\}$, which lie at the transition region between where the two membranes detach and where they are in contact. 

These two problems are well-understood now for the case when $N=2$ thanks to the work of Silvestre \cite{Sil}. Under the assumptions that $f_j$'s are H\"older continuous and satisfy the non-degeneracy condition $f_1>f_2$,  he established interior $C^{1,1}$-regularity of solutions, and showed that the free boundary $\FBA$ is locally a $C^{1,\alpha}$  hypersurface outside possible cusps. The main observation is that under such conditions the difference $u_1-u_2$ solves the obstacle problem, and techniques from the classical work of Caffarelli \cite{Caf} can be applied. Note that $C^{1,1}$-regularity is optimal since the Hessians of the solutions are in general not continuous across the free boundary. This is due to the physical fact that  in the contact set the two forces are evenly {\em shared} by the two membranes, while in the non-contact region each membrane sees only one force term.  For the case of two membranes, one even has a complete picture when the operators acting on the two functions are of different orders as in the work of Caffarelli, De Silva and the first author \cite{CDS}.

Considering the success with the case $N=2$,  the problem with more than two membranes is surprisingly challenging. The major difficulty, comparing with other obstacle-type problems, is the jump in the right-hand side {\em within the same phase}. To illustrate the idea, consider the model case where $N=3$, $f_1=1$, $f_2=0$ and $f_3=-1$. Within the region $\{u_1>u_2\}$, the difference $u_1-u_2$ solves no nice equation since $\Delta u_2$ is either $0$ or $-1/2$ depending on whether it is in contact with $u_3$ or not. Therefore, the jump in the right-hand side of the equation for $u_1-u_2$ does not correspond to a jump across the free boundary involving $u_1$ and $u_2$.  This  `phantom jump' of the right hand side makes it difficult to apply standard techniques for obstacle-type free boundary problems. 

As a result,  although the past three decades has seen quite some works \cite{ARS}\cite{CCV}\cite{LR}, there has been no essential progress on regularity of solutions since the original work of Chipot and Vergara-Caffarelli \cite{CV}, where they proved, for bounded forcing terms, interior $W^{2,p}$-regularity of solutions for any $p<\infty$, which leads to the sub-optimal  $C^{1,\alpha}$-regularity for any $\alpha\in(0,1)$. Almost nothing is known concerning the regularity of the free boundaries other than that they have null Lebesgue measure as in the work of Lindgren and Razani \cite{LR}. 

In this work, we study optimal regularity of solutions and classify blow-up profiles at the free boundary points for $N\ge 3$.  

For H\"older continuous forcing terms $f_j$, we first establish the optimal $C^{1,1}$ interior regularity of the solution $\{u_j\}_{j=1,2,\dots, N}$. The key idea is that at a contact point $p$ of the highest multiplicity, that is,  $p\in \partial\{u_j>u_{j+1}\}$ for all $j$, the known $C^{1,\alpha}$-regularity gives $u_j(p)=u_{j+1}(p)$ as well as $\nabla u_j(p)=\nabla u_{j+1}(p)$. This leads to a comparison between the Hessians for every consecutive pairs of functions $$D^2u_j(p)\ge D^2u_{j+1}(p).$$ On the other hand, the growth of $u_j-u_{j+1}$ from $p$ is at most quadratic as shown by Lindgren and Razani \cite{LR}, that is,  $$D^2u_j(p)\le D^2u_{j+1}(p)+C$$ for some constant $C$.  Combined with a $C^{2,\alpha}$-estimate for the sum $\sum u_j$, these are sufficient to bound the Hessians at  contact points of the highest multiplicity. Outside these points, the problem reduces to a configuration involving fewer membranes, allowing an induction argument, which leads to 
\begin{thm}
Suppose $f_j\in C^\alpha(\Omega)$ for some $\alpha\in(0,1)$ and for each $j=1,2,\dots, N$, then for any compact subset $K$ of $\Omega$, the solution to the $N$-membranes problem, $\{u_j\}_{j=1,2,\dots, N},$ satisfies \begin{equation*}
\sum_{j=1}^N\|u_j\|_{C^{1,1}(K)}\le C\sum_{j=1}^N(\|u_j\|_{\mathcal{L}^{\infty}(\Omega)}+\|f_j\|_{C^{\alpha}(\Omega)}),
\end{equation*} where the constant $C$ depends on $K$, $d, N$ and $\alpha$. 
\end{thm}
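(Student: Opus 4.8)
The plan is to prove the theorem by induction on the number of membranes $N$, following the strategy outlined in the introduction. For $N=1$ this is the classical interior estimate for the Poisson equation $\Delta u_1 = f_1$ (indeed for a single membrane there is no ordering constraint), and for $N=2$ this is Silvestre's result \cite{Sil}. So I would assume the $C^{1,1}$ bound for configurations of at most $N-1$ membranes and establish it for $N$. The first step is to fix a compact set $K$ and a point $p \in K$, and to split into two cases according to whether $p$ is a contact point of the highest multiplicity, i.e.\ $p \in \partial\{u_j > u_{j+1}\}$ for all $j = 1, \dots, N-1$ (equivalently $u_1(p) = u_2(p) = \cdots = u_N(p)$ with all gradients equal, by the known $C^{1,\alpha}$-regularity), or not.

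For the \emph{easy case}, suppose $p$ is not a point of highest multiplicity. Then by upper semicontinuity of the coincidence relation, on a small ball $B_\rho(p)$ at least one pair $\{u_k, u_{k+1}\}$ stays strictly separated, $u_k > u_{k+1}$ on $B_\rho(p)$. On that ball the constraint between $u_k$ and $u_{k+1}$ is inactive, so the system decouples into a configuration of $\{u_1, \dots, u_k\}$ (with the last one solving, on the noncontact set, its own equation — more precisely one treats $\{u_1,\dots,u_k\}$ as a $k$-membranes problem with a modified forcing on $u_k$ coming from the Lagrange multiplier, which is bounded and H\"older by the lower-multiplicity regularity) and $\{u_{k+1}, \dots, u_N\}$, each involving fewer than $N$ membranes. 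Applying the inductive hypothesis on these sub-configurations on $B_{\rho/2}(p)$ yields the desired $C^{1,1}$ bound near $p$. One has to be a little careful that the sub-problem for $\{u_{k+1}, \dots, u_N\}$ really is of the same type (H\"older forcing, direct-method minimizer) — this is a bookkeeping point rather than a genuine difficulty.

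For the \emph{hard case}, $p$ is a point of highest multiplicity. Here I would use the three ingredients mentioned in the introduction. First, the $C^{2,\alpha}$ interior estimate for $S := \sum_{j} u_j$: since $u_j$ is $C^{1,\alpha}$ and the Lagrange multipliers (the measures $\mu_j = -\Delta u_j - f_j$, supported on contact sets) enter $S$ only through a telescoping cancellation, $\Delta S = \sum_j f_j \in C^\alpha$, so $\|S\|_{C^{2,\alpha}(B_{\rho/2}(p))}$ is controlled. In particular $\sum_j D^2 u_j(p)$ exists and is bounded. Second, from $u_j \ge u_{j+1}$ with equality and equal gradients at $p$, one gets $D^2 u_j(p) \ge D^2 u_{j+1}(p)$ in the sense of symmetric matrices (wherever these Hessians exist; one works at Lebesgue points or uses that $u_j \in W^{2,\infty}$ is what we are trying to prove, so instead one argues with difference quotients / the quadratic growth bound directly). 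Third, the quadratic \emph{upper} bound of Lindgren--Razani \cite{LR}: $0 \le u_j - u_{j+1} \le C|x-p|^2$ near $p$, which combined with $u_j - u_{j+1} \ge 0$ and vanishing to first order gives $D^2 u_j(p) \le D^2 u_{j+1}(p) + C\,\mathrm{Id}$. Chaining the matrix inequalities,
\begin{equation*}
D^2 u_N(p) \le D^2 u_j(p) \le D^2 u_N(p) + (N-1)C\,\mathrm{Id} \quad \text{for all } j,
\end{equation*}
so summing, $N\, D^2 u_N(p) \le D^2 S(p) \le N\, D^2 u_N(p) + \frac{N(N-1)}{2} C\,\mathrm{Id}$, which pins down $D^2 u_N(p)$, hence every $D^2 u_j(p)$, in terms of $\|S\|_{C^{2,\alpha}}$ and the constant $C$ from \cite{LR}. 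This gives a pointwise bound $|D^2 u_j(p)| \le C(K, d, N, \alpha)(\sum \|u_j\|_{L^\infty} + \sum \|f_j\|_{C^\alpha})$ at every highest-multiplicity point.

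The last step is to upgrade the pointwise Hessian bounds at these special points, together with the $C^{1,1}$ bound away from them from the easy case, to a uniform $W^{2,\infty}$ bound on all of $K$. The clean way is: each $u_j$ satisfies $|\Delta u_j| \le C$ a.e.\ (on the noncontact region $\Delta u_j = f_j$; on the contact region the shared-force structure plus the $C^{2,\alpha}$ bound on partial sums gives the bound), and $D^2 u_j$ is bounded at highest-multiplicity points by the above; at any other point a neighborhood avoids full multiplicity and the easy case applies. A standard covering/interpolation argument — or the observation that the set of highest-multiplicity points is where the easy-case estimate degenerates, and there the hard-case estimate takes over — then yields the global bound, with the $C^{1,1}$ seminorm estimated by controlling second difference quotients through these two regimes. \textbf{Main obstacle.} The genuinely delicate point is the hard case: making rigorous the passage from the three \emph{a priori soft} facts (ordering, quadratic upper growth, $C^{2,\alpha}$ of the sum) to a \emph{quantitative pointwise} Hessian bound, since a priori $u_j$ need not be twice differentiable at $p$ — so the matrix inequalities must be phrased via second-order blow-ups / difference quotients and one must verify the relevant limits exist and are comparable; this is exactly where the "phantom jump" obstruction is neutralized by looking at the sum rather than at individual differences.
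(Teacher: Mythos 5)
Your overall skeleton --- induction on $N$ with Silvestre's two-membrane result as the base case, a dichotomy at highest-multiplicity contact points versus the rest, and the three ingredients ($C^{2,\alpha}$ control of $\sum u_j$, the ordering $D^2u_j(p)\ge D^2u_{j+1}(p)$, and the Lindgren--Razani quadratic upper bound) --- is the paper's strategy, and your treatment of the highest-multiplicity case (chaining the matrix inequalities and comparing with $D^2\bigl(\sum u_j\bigr)(p)$) is exactly the paper's Remark 2.7. The genuine gap is in your ``easy case'' and in the gluing step. When $p$ is not of highest multiplicity you decouple on a ball $B_\rho(p)$ on which some pair stays strictly separated and invoke the inductive hypothesis there; but $\rho$ depends on $p$ and shrinks to $0$ as $p$ approaches the highest-multiplicity set, so the constant inherited from the induction blows up. The pointwise Hessian bound you obtain \emph{at} highest-multiplicity points does not control the Hessian at nearby points, so the two regimes never overlap with uniform constants, and no ``standard covering/interpolation argument'' closes this; it is precisely where the difficulty of the problem sits.

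The paper closes it as follows (Proposition 2.8). At an arbitrary $p$, group the membranes according to contact at $p$, let $R_k$ be the distance from $p$ to $\partial\{u_{j_k}>u_{j_k+1}\}$ for each gap between consecutive groups, and let $R_{k^*}$ be the largest of these. Since each sphere $\partial B_{R_k}(p)$ meets the contact set $\{u_{j_k}=u_{j_k+1}\}$, the quadratic growth estimate (Proposition 2.6) applies both within each group and across each gap and yields $0\le u_j-u_{j+1}\le CR_{k^*}^2$ in $B_{R_{k^*}}(p)$ for \emph{all} $j$. One then rescales by $R_{k^*}$ and subtracts a reference membrane, setting $v_s=R_{k^*}^{-2}\bigl(u_s(p+R_{k^*}\cdot)-u_{j_{k^*}}(p+R_{k^*}\cdot)\bigr)$: these solve a membranes problem with strictly fewer membranes and, crucially, are \emph{uniformly bounded} in $B_1$ thanks to the quadratic growth bound, so the inductive hypothesis gives $|D^2v_s(0)|\le C$ with a universal constant, i.e.\ $|D^2u_s(p)-D^2u_{j_{k^*}}(p)|\le C$ independently of how small $R_{k^*}$ is. Combined with $|\sum D^2u_j(p)|\le C$ this pins down every Hessian at every point with a uniform constant. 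The missing idea in your proposal is this normalization of the rescaled sub-problem by the quadratic growth estimate; without it the induction does not produce scale-independent constants, and the theorem does not follow.
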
 In the rest of the paper, any constants depending only on $d$ and $N$ will be called {\em universal}. 
 
This opens the door to the study of free boundary regularity. As a first step, we need to classify blow-up profiles at points on the free boundary.  

To this end, we might as well assume \begin{equation*}\text{Null average assumption (NA):}
\sum u_j=0 \text{ pointwisely in $\Omega$}.
\end{equation*} This can be achieved by subtracting from each function the average $\frac{1}{N}\sum u_j$, which has no effect on the free boundaries.  Moreover, we focus on the free boundary with highest multiplicity, namely, $\cap_{j=1,2,\dots,N-1}\partial\{u_j>u_{j+1}\}$, as free boundaries with lower multiplicities involve fewer membranes. 

Although the method works for general forcing terms with some regularity, we assume the following simplification in order not to dilute our focus: \begin{equation*}\text{Constant forcing assumption (CF): }
\text{ The forcing terms $f_j$'s are constants}.
\end{equation*} 

We also assume that the forcing terms satisfy the following non-degeneracy condition, which in a sense corresponds to the strict concavity assumption on the obstacle in the classical obstacle problem:
$$\text{Non-degeneracy assumption (ND): }  f_j-f_{j+1}\ge\theta \text{ for some constant }  \theta>0.$$

For $p\in\cap_{j=1,2,\dots,N-1}\partial\{u_j>u_{j+1}\}$, we define, for $j=1,2,\dots, N$, the blow-up of $u_j$ at $p$ at scale $r>0$ to be $$\Tprj(x):=\frac{1}{r^2}u_j(p+rx).$$With $C^{1,1}$-regularity and the null average  assumption, these are compact families of functions in $C^{1,\alpha}_{loc}(\mathbb{R}^d)$ as $r\to 0$. The limits of subsequences are \textit{the blow-up profiles} or tangent solutions, and we use the notation $T_{p,0}u_j$ to denote a generic blow-up limit of $u_j$ at $p$.  This is a slight abuse of notation since the limit very much depends on the particular subsequence of $r\to 0$.  It is nevertheless useful in stating properties that hold true for all such limits. 

To classify all such limits, one needs to control the solutions at very fine scales. Inspired by the beautiful works of Weiss \cite{Wei1}\cite{Wei2}, our tool to achieve this control is  the following Weiss-type energy functional. At $p$ and scale $r$ it is defined as \begin{equation*}\Fpr:=\frac{1}{r^{d+2}}\int_{B_r(p)}\sum(|\nabla u_j|^2+2 f_ju_j)-\frac{2}{r^{d+3}}\int_{\partial B_r(p)}\sum u_j^2.\end{equation*}

Using the scaling symmetry of this functional and a radial competitor for the energy $\mathcal{F}$, we have the following monotonicity result: \begin{thm}
Under (CF), solutions to the $N$-membranes problem satisfy \begin{equation*}
\frac{d}{dr}|_{r=r_0}\Fpr\ge \frac{1}{r_0^{d+2}}\int_{\partial B_{r_0}(p)}\sum (\frac{\partial}{\partial r}u_j-2u_j/r_0)^2
\end{equation*} whenever $B_{r_0}(p)$ is compactly contained in $\Omega$. 
\end{thm}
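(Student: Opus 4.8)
The plan is to follow the classical Weiss scheme, computing $\frac{d}{dr}W(\{u_j\},p,r)$ directly and showing the error terms assemble into the claimed square. Without loss of generality translate so that $p=0$ and write $W(r) := W(\{u_j\},0,r)$. First I would exploit the scaling structure: introduce the rescalings $u_j^r(x) := r^{-2}u_j(rx)$, so that $W(r) = \int_{B_1}\sum(|\nabla u_j^r|^2 + 2f_j u_j^r) - 2\int_{\partial B_1}\sum (u_j^r)^2$, using here that the $f_j$ are constants (assumption (CF)) so the forcing term scales correctly; note $f_j u_j$ has the right homogeneity since $u_j^r$ carries the factor $r^{-2}$ against the $r^d$ from the volume element, matching $r^{d+2}$. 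Then $\frac{d}{dr}W(r) = \int_{B_1}\sum(2\nabla u_j^r\cdot\nabla\partial_r u_j^r + 2f_j\partial_r u_j^r) - 4\int_{\partial B_1}\sum u_j^r\,\partial_r u_j^r$, and a computation gives $\partial_r u_j^r(x) = r^{-1}(x\cdot\nabla u_j^r(x) - 2u_j^r(x))$, which is precisely (a rescaled form of) the quantity $\frac{\partial}{\partial r}u_j - 2u_j/r$ appearing on the right-hand side. Let me abbreviate $v_j := \partial_r u_j^r$.

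Next I would integrate by parts in the bulk term: $\int_{B_1}\nabla u_j^r\cdot\nabla v_j = \int_{\partial B_1} v_j\,\partial_\nu u_j^r - \int_{B_1} v_j\,\Delta u_j^r$. The key structural input is that each $u_j^r$ satisfies $\Delta u_j^r = f_j$ pointwise in the non-contact regions and, in the contact regions, the Euler–Lagrange system forces the sum of the Laplacians over each maximal contact block to equal the corresponding sum of the $f_j$'s (this is the physical "force sharing" mentioned in the introduction). The crucial cancellation is therefore: $\sum_j \int_{B_1} v_j(\Delta u_j^r - f_j) = 0$. This is NOT true term by term, but it holds after summing over $j$, because on any contact set where $u_{j}^r = u_{j+1}^r = \cdots$ the functions $v_j$ also coincide there (they are $r^{-1}(x\cdot\nabla - 2)$ applied to equal functions), so $\sum v_j(\Delta u_j^r - f_j)$ over the block is $v \cdot \sum(\Delta u_j^r - f_j) = v\cdot 0 = 0$. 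Assembling: the bulk integral-by-parts contributes $2\int_{\partial B_1}\sum v_j\,\partial_\nu u_j^r$ plus the vanishing term, so $\frac{d}{dr}W(r) = 2\int_{\partial B_1}\sum v_j\,\partial_\nu u_j^r - 4\int_{\partial B_1}\sum u_j^r v_j = 2\int_{\partial B_1}\sum v_j(\partial_\nu u_j^r - 2u_j^r) = 2\int_{\partial B_1}\sum v_j^2$ on $\partial B_1$, since on $\partial B_1$ one has $\partial_\nu u_j^r = x\cdot\nabla u_j^r$ and hence $\partial_\nu u_j^r - 2u_j^r = r v_j$, giving $\frac{d}{dr}W(r) = 2r\int_{\partial B_1}\sum v_j^2 \ge 0$. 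Rescaling back to $B_{r_0}$ and rewriting $v_j$ in terms of $\frac{\partial}{\partial r}u_j - 2u_j/r_0$ produces exactly the stated inequality with the factor $r_0^{-(d+2)}$.

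The step I expect to be the main obstacle is making rigorous the "force-sharing" identity $\sum_j v_j(\Delta u_j^r - f_j) = 0$ as a measure identity, since a priori $\Delta u_j^r$ is only an $L^\infty$ function (by the $W^{2,p}$ regularity, upgraded to $C^{1,1}$ by Theorem 1.1) and the contact sets can be complicated; one must verify that the Euler–Lagrange system genuinely yields, a.e., that the Laplacians are constant on each contact block and sum correctly, and that $v_j$ is well-defined and equal across a block a.e. A secondary technical point is justifying the differentiation under the integral sign and the integration by parts at the generic radius $r_0$ — this needs the $C^{1,1}$ bound of Theorem 1.1 (so $\nabla u_j$ is Lipschitz, traces on $\partial B_{r_0}$ are controlled) together with the coarea formula to handle the boundary term, and one should note the inequality is an identity for a.e. $r_0$ but the monotonicity statement "$\frac{d}{dr}|_{r=r_0}W \ge \ldots$" is then interpreted in the appropriate one-sided or distributional sense. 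Everything else is the routine Weiss bookkeeping.
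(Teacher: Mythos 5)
Your proposal is correct in substance but takes a genuinely different route from the paper. The paper never integrates by parts against the Euler--Lagrange system: its key step (the lemma preceding the differentiation) inserts into $\mathcal{F}$ an explicit competitor that replaces each $u_j$ on a thin annulus $B_1\setminus B_{1-\epsilon}$ by the degree-$2$ homogeneous extension of its boundary values, and extracts the first-order term in $\epsilon$; minimality then yields the single inequality $(d+2)\int_{B_1}\sum(|\nabla u_j|^2+2f_ju_j)\le\int_{\partial B_1}\sum(|\nabla_\tau u_j|^2+2f_ju_j+4u_j^2)$, after which the derivative of the Weiss energy is assembled exactly as in your ``bookkeeping'' step and rescaled via $W(\{u_j\},0,r)=W(\{T_{0,r}u_j\},0,1)$. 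Your route instead relies on the divergence theorem plus the force-sharing cancellation $\sum_j v_j(\Delta u_j^r-f_j)=0$ a.e., and that cancellation is indeed legitimate: writing $\Delta u_j=f_j+\lambda_j-\lambda_{j-1}$ with $\lambda_j\in L^p$ supported on $\{u_j=u_{j+1}\}$ (the $L^p$ form of the Euler--Lagrange system recorded in Proposition 2.3 and Remark 2.4), Abel summation gives $\sum_j v_j(\Delta u_j-f_j)=\sum_j\lambda_j(v_j-v_{j+1})$, and $v_j=v_{j+1}$ a.e.\ on $\{u_j=u_{j+1}\}$ because $u_j-u_{j+1}\ge 0$ is $C^1$ and vanishes there, hence so does its gradient; the blockwise version you describe is equivalent. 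The trade-off is instructive: your computation, pushed through, gives the exact Rellich/Pohozaev-type identity $\frac{d}{dr}W=\frac{2}{r^{d+2}}\int_{\partial B_r}\sum\bigl(\frac{\partial}{\partial r}u_j-\frac{2u_j}{r}\bigr)^2$, which is a factor $2$ stronger than the stated inequality, but it needs the a.e.\ structure of the multipliers and the $C^{1,1}$ theory to justify the integration by parts; the paper's competitor argument uses only minimality and the constancy of the $f_j$, is completely insensitive to how the Laplacians distribute themselves over the contact blocks, and passes verbatim to local minimisers such as blow-up limits. The technical caveats you flag (differentiability of $W$ in $r$, interchanging $\partial_r$ and $\nabla$, traces on $\partial B_{r_0}$) are real but routine given Theorem 1.1.
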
 Here $\frac{\partial}{\partial r}$ is the derivative in the radial direction. 

This not only gives the monotonicity of the  Weiss-type energy along the radii, but also gives the condition for constancy, namely, $u_j$'s are all homogeneous of degree 2 with respect to $p$. Since blow-up solutions only see the Weiss-type energy at the {\em infinitesimal} scale, they have constant  Weiss-type energies and in particular one has the following \begin{thm}
Under (NA) and (CF), suppose $\{u_j\}_{j=1,2,\dots, N}$  solves the $N$-membranes problem and that $p\in\cap_{j=1,2,\dots,N-1}\partial\{u_j>u_{j+1}\}$. Then the blow-up limits satisfy 
$$T_{p,0}u_{j}(\lambda x)=\lambda^2T_{p,0}u_{j}(x)$$ for all $\lambda>0$ and $x\in\mathbb{R}^d$.
\end{thm}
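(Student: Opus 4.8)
The plan is to combine the monotonicity formula (the previous theorem) with the almost-scaling invariance of the Weiss energy to show that the blow-up limit has zero Weiss energy derivative at every scale, hence is homogeneous of degree $2$. First I would record the behavior of the functional under rescaling: if $\Tprj(x)=\frac{1}{r^2}u_j(p+rx)$, then a direct change of variables shows $W(\{\Tprj\},0,\rho)=W(\{u_j\},p,r\rho)$ for every $\rho>0$. Thus controlling $W$ along the blow-up sequence reduces to understanding $r\mapsto W(\{u_j\},p,r)$ as $r\to 0$.

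Next I would establish that $W(\{u_j\},p,r)$ has a finite limit as $r\to 0^+$. By the monotonicity theorem, $r\mapsto W(\{u_j\},p,r)$ is nondecreasing (the right-hand side is a sum of squares, hence $\ge 0$), so the limit $W_0:=\lim_{r\to0^+}W(\{u_j\},p,r)$ exists in $[-\infty,\infty)$; I then need a lower bound. Here I would use the $C^{1,1}$-regularity from Theorem 1.1 together with the null average assumption and $p\in\cap_j\partial\{u_j>u_{j+1}\}$: since all $u_j$ agree to first order at $p$ and have bounded Hessians, $|u_j(p+rx)|\le Cr^2$ near $p$, which makes every term in $W(\{u_j\},p,r)$ bounded uniformly in $r$ (the gradient term is $O(1)$, the forcing term is $O(r)$ by (CF), and the boundary term is $O(1)$). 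Hence $W_0$ is finite, and in fact $W(\{u_j\},p,r)\to W_0$ with $W_0=\lim$ being a genuine real number.

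Then I would pass to the blow-up. Fix a subsequence $r_k\to 0$ along which $\Tprj\to T_{p,0}u_j=:v_j$ in $C^{1,\alpha}_{loc}(\mathbb{R}^d)$. For any fixed $\rho>0$,
\begin{equation*}
W(\{v_j\},0,\rho)=\lim_{k\to\infty}W(\{\Tprj[r_k]\},0,\rho)=\lim_{k\to\infty}W(\{u_j\},p,r_k\rho)=W_0,
\end{equation*}
where the first equality uses the $C^{1,\alpha}_{loc}$ convergence (which controls the gradient and boundary integrals; the forcing term $\frac{1}{\rho^{d+2}}\int_{B_\rho}2\sum f_j v_j^{(k)}$ where $v_j^{(k)}:=\Tprj[r_k]$ carries a factor $r_k$ after the rescaling and vanishes in the limit, consistent with the fact that blow-up profiles solve the force-free problem), and the last equality is the existence of the limit $W_0$ along any sequence of radii tending to $0$. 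Thus $\rho\mapsto W(\{v_j\},0,\rho)$ is constant on $(0,\infty)$.

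Finally I would invoke the rigidity/equality case of the monotonicity theorem applied to the blow-up solution $\{v_j\}$ (which itself solves an $N$-membranes problem, with forcing $\equiv 0$ after blow-up, and retains the ordering $v_1\ge\cdots\ge v_N$ by passing the inequalities to the limit). Since $\rho\mapsto W(\{v_j\},0,\rho)$ is constant, its derivative vanishes identically, so the monotonicity inequality forces $\int_{\partial B_\rho(0)}\sum(\frac{\partial}{\partial r}v_j-2v_j/\rho)^2=0$ for all $\rho$, i.e. $\frac{\partial}{\partial r}v_j=\frac{2}{\rho}v_j$ on every sphere. Integrating this ODE in the radial variable gives $v_j(\lambda x)=\lambda^2 v_j(x)$ for all $\lambda>0$, which is the claim. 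The main obstacle I anticipate is the first half of the second paragraph: making sure the Weiss energy is bounded below along $r\to 0$ and that the forcing term behaves correctly under rescaling, since this is precisely where the $C^{1,1}$-bound from Theorem 1.1, the null average normalization, and the highest-multiplicity hypothesis $\nabla u_j(p)=\nabla u_{j+1}(p)$ all have to be used together; a secondary technical point is checking that the passage to the limit commutes with the boundary integral $\int_{\partial B_\rho}$, for which one uses that $C^{1,\alpha}_{loc}$ convergence is in particular uniform convergence of the functions on $\partial B_\rho$.
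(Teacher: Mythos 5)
Your overall architecture is exactly the paper's: the scaling identity $W(\{T_{p,r}u_j\},0,\rho)=W(\{u_j\},p,r\rho)$, existence of the limit $W_0$ by monotonicity (with your extra — and welcome — care about finiteness via the $C^{1,1}$ bound and the vanishing of $u_j$ and $\nabla u_j$ at a highest-multiplicity point), passage to the limit by $C^{1,\alpha}_{loc}$ convergence, and then the equality case of the monotonicity formula. That is a correct proof modulo one point that you should fix.

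The point is your treatment of the forcing term. You assert that $\frac{1}{\rho^{d+2}}\int_{B_\rho}2\sum f_j T_{p,r_k}u_j$ ``carries a factor $r_k$ after the rescaling and vanishes in the limit,'' and that the blow-up profiles ``solve the force-free problem.'' Both claims are false, and the second contradicts your own (correct) scaling identity: changing variables in $W(\{u_j\},p,r\rho)$ with $u_j(p+rx)=r^2T_{p,r}u_j(x)$ converts $\frac{1}{(r\rho)^{d+2}}\int_{B_{r\rho}(p)}2f_ju_j$ into $\frac{1}{\rho^{d+2}}\int_{B_\rho}2f_jT_{p,r}u_j$ with no leftover power of $r$, because the forcing $f_j$ is scale-invariant under (CF). Accordingly $T_{p,r_k}u_j$ minimises $\mathcal{F}$ with the \emph{same} constant forcing $f_j$, the forcing term converges to $\frac{1}{\rho^{d+2}}\int_{B_\rho}2\sum f_jT_{p,0}u_j$ (generically nonzero), and the blow-up limit is a local minimiser with forcing $f_j$, not $0$. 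Your chain of equalities survives — the first equality holds simply because every term of $W$, including the forcing term, passes to the limit under uniform convergence on $\overline{B_\rho}$ — but the rigidity step must be applied to $\{T_{p,0}u_j\}$ as a minimiser with forcing $f_j$ and to the Weiss energy containing the forcing term; if the blow-up really were force-free, the functional you proved constant and the functional to which you apply the equality case would not be the same one. (The conclusion is unaffected because the equality case only uses the vanishing of $\int_{\partial B_\rho}\sum(\partial_r v_j-2v_j/\rho)^2$, which is forcing-independent, but the statement as written is wrong and would also clash with the later classification, which relies on the blow-ups satisfying $\Delta u_j=f_j\ne 0$ in their positivity regions.) A cosmetic slip in the same spirit: the forcing term in $W(\{u_j\},p,r)$ is $O(1)$, not $O(r)$, under the bound $|u_j(p+y)|\le C|y|^2$; this does not affect your boundedness argument.
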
 

Even with this extra rigidity,  classifying all blow-up profiles for general spatial dimension seems out of reach at the moment. We therefore focus on the special case of the plane.

In this setting, we have the following complete classification of solutions that are homogeneous of degree 2: 

\begin{thm}
Let $\{u_j\}_{j=1,2,\dots N}$ be homogeneous of degree 2  connected solution to  the $N$-membranes problem in $\mathbb{R}^2$.  Under (NA), (CF), (ND),  and assume $0\in\cap_{j=1,2,\dots, N-1}\partial\{u_j>u_{j-1}\}$, one has

$$u_j(x)=P(x)+a_j(x\cdot e)_+^2-b_j(x\cdot e)_-^2,$$where $a_j$ and $b_j$ are real numbers, $e$ is a unit vector and $P$ is a quadratic polynomial. 

In particular, $\partial\{u_j>u_{j+1}\}$ coincides with a single line for all $j$. \end{thm}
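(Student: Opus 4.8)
The plan is to set up an ODE system on the unit circle satisfied by the traces of the $u_j$'s, exploiting the degree-$2$ homogeneity. Writing $x = \rho\,\omega$ with $\omega = (\cos\vartheta,\sin\vartheta)$, homogeneity gives $u_j(x) = \rho^2 \phi_j(\vartheta)$ for $2\pi$-periodic functions $\phi_j$. In the open arcs where $u_j > u_{j+1}$ (or the relevant detachment pattern holds), $u_j$ solves $\Delta u_j = f_j^{\mathrm{shared}}$ for the appropriate shared constant (the average of the active forces), so $\phi_j$ satisfies a forced linear ODE $\phi_j'' + 4\phi_j = (\text{const})$; in the closed contact arcs where consecutive functions coincide, the corresponding $\phi_j$'s are equal there. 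The quadratic polynomial $P$ will come out as the particular solutions plus the trigonometric degree-$2$ part; after subtracting $P$ one is left, on each arc, with solutions of $\phi'' + 4\phi = 0$, i.e. linear combinations of $\cos 2\vartheta$ and $\sin 2\vartheta$, and one must patch these across the free boundary points using the $C^{1,1}$-regularity of Theorem~1.1 (which, with (NA), means $\phi_j \in C^{1,1}(S^1)$ and the $\phi_j$ agree to first order at contact endpoints).

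The key structural input is \emph{connectedness} of the solution together with (ND): I would argue that the contact set $\{u_1 = \cdots = u_N\}$ of highest multiplicity, being a cone through the origin in $\mathbb{R}^2$, is either a single line, a union of rays, a half-plane, or something larger, and use the ND condition to rule out configurations that would force a strict local minimum/maximum incompatible with the PDE. The heart of the matter is a convexity/superharmonicity count: $\Delta u_j - \Delta u_{j+1} = (f_j^{\mathrm{shared}} - f_{j+1}^{\mathrm{shared}}) \le -\theta' < 0$ on $\{u_j > u_{j+1}\}$ by (ND), so $u_j - u_{j+1}$ is a nonnegative superharmonic (indeed with a definite negative Laplacian) degree-$2$ homogeneous function vanishing on the complement of an open cone; such a function must be supported on a single half-plane $\{x\cdot e > 0\}$ and equal there to $a(x\cdot e)^2$ for some $a \ge 0$, with the mirror contribution $-b(x\cdot e)_-^2$ coming from the neighbouring pair's analysis — this is exactly the claimed form, and it forces every free boundary $\partial\{u_j > u_{j+1}\}$ to be the same line $\{x\cdot e = 0\}$.

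Concretely the steps are: (1) reduce via (NA) and homogeneity to the periodic ODE picture for the $\phi_j$; (2) on each nodal arc solve the linear second-order ODE and record that the general degree-$2$ homogeneous harmonic-plus-particular solution is $P + (\text{trig degree 2})$; (3) show, using the ordering $u_1 \ge \cdots \ge u_N$, Theorem~1.1 ($C^{1,1}$), and the complementarity conditions of the variational inequality, that the only admissible matching of the pieces across free boundary points is the one-dimensional profile — here the degree-$2$ homogeneous $v := u_j - u_{j+1} \ge 0$ with $\Delta v \le -\theta'$ on $\{v>0\}$ is the crucial object, and a short argument (e.g. integrating $\Delta v$ against $1$ on $B_1$, or directly solving $\phi'' + 4\phi = -\theta'$ on an arc with Dirichlet data $0$ at both endpoints and checking the arc length must be exactly $\pi$) pins the support down to a half-plane; (4) run this simultaneously for every consecutive pair $j$ and observe that the various half-planes share the same boundary line by connectedness, yielding the stated formula and the final sentence about the free boundaries coinciding.

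The main obstacle I anticipate is Step~(3): ruling out "broken" or multi-line contact configurations. The subtlety is the phantom-jump phenomenon emphasized in the introduction — the constant on the right-hand side of the equation for $u_j - u_{j+1}$ can itself jump as $u_{j+1}$ contacts or detaches from $u_{j+2}$, so $v = u_j - u_{j+1}$ is \emph{not} a clean obstacle-problem solution and its Laplacian need not be a single constant on $\{v>0\}$. Handling this cleanly will require treating the whole ordered tuple $\phi_1 \ge \cdots \ge \phi_N$ on $S^1$ as a coupled system, partitioning $S^1$ into finitely many arcs according to which contacts are active, and using (ND) to bound from below the total "concavity budget" $\sum_j (\phi_{j}'' + 4\phi_j)$ arc by arc; the homogeneity-$2$ constraint $\phi'' + 4\phi = (\text{piecewise const})$ then leaves only finitely many combinatorial possibilities for the arc lengths, all but one of which violate either periodicity, the ordering, or the $C^{1,1}$ matching. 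I would expect the bookkeeping to be the bulk of the work, with no single deep idea beyond the superharmonicity-of-$v$ observation above.
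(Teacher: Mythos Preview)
Your framework is correct and coincides with the paper's: reduce via homogeneity to the traces $\phi_j$ on $\mathbb{S}^1$, solve $\phi''+4\phi=(\text{piecewise const})$ on arcs, and match at free boundary points using the $C^{1,1}$ regularity. You have also correctly identified that the phantom-jump phenomenon is the real enemy: $\Delta(u_j-u_{j+1})$ is not a single constant on $\{u_j>u_{j+1}\}$, so the clean ODE computation that would pin the arc length to $\pi$ does not apply directly.

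However, there is a genuine gap in Step~(3), and it is more than bookkeeping. First, a sign slip: under (ND) one has $f_j>f_{j+1}$, and on $\{u_j>u_{j+1}\}$ each of $u_j,u_{j+1}$ is in contact only with membranes above, respectively below, so the shared Laplacian of $u_j$ is an average of $f_k$'s with $k\le j$ while that of $u_{j+1}$ is an average with indices $\ge j+1$. Hence $\Delta v\ge\theta>0$ and $v=u_j-u_{j+1}$ is \emph{sub}harmonic on its positivity set, not superharmonic. More importantly, the bound $\Delta v\ge\theta$ alone does \emph{not} force the support of a nonnegative $2$-homogeneous $v$ to be a half-plane: once $\Delta v$ is allowed to be a non-constant step function, the four matching conditions at the two endpoints of the arc no longer determine the arc length, and a priori many configurations are consistent with positivity, ordering, periodicity, and $C^{1,1}$ matching. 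Your proposed ``concavity budget'' and combinatorial enumeration are not sharp enough to close this.

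What the paper supplies, and what your plan is missing, is a structural lemma that controls the \emph{shape} of the step function $\Delta v$ rather than just its sign. The key observation is that on a shortest arc $I$ where $u_k>u_{k+1}$ and $\Delta u_k$ is constant, the contact sets $\{u_{k+1}=u_{k+s}\}\cap I$ for $s\ge 2$ are nested and \emph{connected} (else a strictly shorter arc with the same property would appear inside $I$). This forces $\Delta u_{k+1}$ to be unimodal on $I$ (first non-increasing, then non-decreasing), so $\Delta(u_k-u_{k+1})$ is a nonnegative step function that is first non-decreasing, then non-increasing. A short convexity lemma---if $\Delta v$ is a nonnegative \emph{monotone} step function on a cone of opening $\le\pi$ and $v$, $\nabla v$ vanish on one edge, then $v$ is convex---applied from each end of $I$ then shows $|I|\ge\pi$, with equality forcing $\Delta v$ constant. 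This unimodality-plus-convexity step is the ``single deep idea'' you did not anticipate; once you have it, the inductive alignment of all free boundaries to one line follows quickly.
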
 

The precise definition of connected solutions is postponed to the final section of this work. It corresponds to the physical situation where there is forcing between the membranes when they contact. Since the problem reduces to one involving fewer membranes when \textit{connectedness} fails, this condition is not a restriction on the classification result as any solution can be split into connected solutions.

It is very interesting to note the perfect alignment of free boundaries. This agrees with the physical intuition that the upward forcing must balance with the downward.  Mathematically, this means that at the infinitesimal level, we do not see the `phantom jumps', which gives hope to the study of free boundary regularity in the plane. 

This paper is organised as follows: In the next section we prove the optimal interior regularity of solutions to $N$-membranes problem in general spatial dimensions, after which we establish monotonicity of the  Weiss-type energy functional  and some of its consequences. In the last section, we give the  classification of homogeneous solutions in the plane.

\section{Optimal interior regularity of solutions}

In this section we deal with the optimal interior $C^{1,1}$-regularity of solutions and prove Theorem 1.1. To be precise, by solutions of the $N$-membranes problem in $\Omega$ we mean 

\begin{defi}
Given $f_j\in C^{\alpha}(\Omega)$ for $j=1,2,\dots,N$, a solution to the $N$-membranes problem are $N$ functions $\{u_j\}_{j=1,2,\dots.N}$ in $$\mathcal{K}:=\{\{v_j\}|v_j\in H^{1}(\Omega), v_1\ge v_2\ge\dots\ge v_N \text{ in $\Omega$}\}$$such that $$\mathcal{F}(\{u_j\}):=\int_{\Omega}\sum( |\nabla u_j|^2+2f_ju_j)dx\le\int_{\Omega}\sum (|\nabla v_j|^2+2 f_jv_j)dx=:\mathcal{F}(\{v_j\})$$ for all $\{v_j\}\in\mathcal{K}$ and $v_j=u_j$ on $\partial \Omega$, $j=1,2,\dots, N$.
\end{defi} 

\begin{rem}
Under very mild conditions on $\Omega$ and boundary conditions, one can produce solutions by the direct method. Uniqueness is not an issue either due to the convexity of $\mathcal{F}$. We do not concern ourselves with these issues but focus solely on regularity estimates. To this end, one needs some regularity assumptions on the forcing terms $f_j$ that are better than mere boundedness. Here we choose to deal with $f_j$ in H\"older classes. 
\end{rem} 

A standard argument gives the Euler-Lagrange equations in the $\mathcal{L}^p$ sense \cite{CV}:
\begin{prop}
$$\Delta u_j=f_j \text{ in $\{u_{j-1}>u_{j}>u_{j+1}\}$},$$ and 
$$\Delta u_j=\Delta u_{j+1} \text{ in $\{u_j=u_{j+1}\}$}.$$
\end{prop}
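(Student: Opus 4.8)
The plan is to derive the two relations by making variations within the admissible class $\mathcal{K}$ that preserve the ordering constraint, and then localizing. First I would establish the second identity, which is the "sharing" phenomenon. Fix a point in the interior of the contact set $\{u_j=u_{j+1}\}$, and take a small ball $B$ compactly contained in this open set. For a test function $\varphi\in C_c^\infty(B)$, the one-parameter family $v_k=u_k$ for $k\neq j,j+1$ and $v_j=u_j+t\varphi$, $v_{j+1}=u_{j+1}+t\varphi$ stays in $\mathcal{K}$ for all $t\in\mathbb{R}$ and all small $|t|$: the pair $(u_j,u_{j+1})$ is perturbed by the same amount so their mutual ordering is untouched, and since $u_j=u_{j+1}$ equals a function strictly below $u_{j-1}$ and strictly above $u_{j+2}$ on $B$ (by continuity and shrinking $B$), the neighboring inequalities survive for small $t$. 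Since $t\mapsto\mathcal{F}(\{v_k\})$ is a smooth function minimized at $t=0$, its derivative vanishes, giving $\int_B(\nabla u_j+\nabla u_{j+1})\cdot\nabla\varphi + (f_j+f_{j+1})\varphi = 0$, i.e. $\Delta(u_j+u_{j+1})=f_j+f_{j+1}$ in $B$ in the distributional sense; combined with $u_j=u_{j+1}$ there this yields $\Delta u_j=\Delta u_{j+1}$ (and this common Laplacian equals $(f_j+f_{j+1})/2$, though the statement only records the equality). For the first identity, fix a ball $B\Subset\{u_{j-1}>u_j>u_{j+1}\}$; by continuity the strict inequalities persist on $B$, so for $\varphi\in C_c^\infty(B)$ the variation $v_j=u_j+t\varphi$ with all other functions unchanged stays in $\mathcal{K}$ for $|t|$ small (regardless of the sign of $\varphi$, since we have strict gaps on both sides), and minimality gives $\Delta u_j=f_j$ in $B$.

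The regularity needed to even speak of "interior of the contact set" and to justify the continuity arguments is supplied by the Chipot–Vergara-Caffarelli $W^{2,p}$ theory cited in the excerpt, which gives $u_j\in C^{1,\alpha}_{loc}$; this makes $\{u_{j-1}>u_j>u_{j+1}\}$ open and lets the strict inequalities be propagated to small balls. Since the statement is phrased "in $\mathcal{L}^p$ sense," one works with the $W^{2,p}$ representatives throughout, and the identities $\Delta u_j=f_j$, $\Delta u_j=\Delta u_{j+1}$ are understood to hold almost everywhere on the respective sets. I would also note, for completeness, that the full open domain decomposes (up to a measure-zero free boundary, by \cite{LR}) into the open sets $\{u_{j-1}>u_j>u_{j+1}\}$ and the open parts of the contact sets, so the two relations together pin down $\Delta u_j$ a.e.

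The only mild subtlety — and the one place where care is needed — is the two-sided nature of the admissible perturbations. In a single obstacle problem one only gets a variational inequality because perturbations are one-sided; here the point is precisely that inside the open sets listed (either a region with strict ordering on both sides of $u_j$, or the interior of a contact set where $u_j$ and $u_{j+1}$ are moved together) the constraint is inactive, so genuine two-sided variations are admissible and one recovers an equality rather than an inequality. I expect this bookkeeping — verifying that each proposed variation genuinely stays in $\mathcal{K}$ for small $t$ of either sign — to be the main (though routine) obstacle; the rest is the standard computation of $\tfrac{d}{dt}\big|_{t=0}\mathcal{F}$.
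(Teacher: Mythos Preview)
Your argument for the first identity is correct and standard. For the second, there is a gap: you assert that at a point in the interior of $\{u_j=u_{j+1}\}$ one can shrink $B$ so that $u_{j-1}>u_j$ and $u_{j+1}>u_{j+2}$ hold strictly on $B$. This does not follow --- a point in $\operatorname{int}\{u_j=u_{j+1}\}$ may perfectly well lie in $\{u_{j-1}=u_j\}$ or $\{u_{j+1}=u_{j+2}\}$ as well (triple or higher contact), and then no shrinking produces the strict gaps you need. Your perturbation $v_j=u_j+t\varphi$, $v_{j+1}=u_{j+1}+t\varphi$ with the other membranes fixed would then violate the ordering with the unperturbed neighbors, so it is not admissible.

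There are two standard repairs. One is to stratify further: on $\operatorname{int}\{u_{k-1}>u_k=\cdots=u_{k+m}>u_{k+m+1}\}$ perturb \emph{all} of $u_k,\dots,u_{k+m}$ by the same $t\varphi$, obtain $\Delta(\sum_{s=k}^{k+m}u_s)=\sum_{s=k}^{k+m}f_s$ there, and conclude equality of the Laplacians since the functions coincide (this is exactly what the subsequent Remark in the paper records). The second, cleaner, route bypasses variations for the second identity altogether: once $W^{2,p}$ regularity is in hand, set $w=u_j-u_{j+1}\in W^{2,p}$ and invoke the standard Sobolev fact that $\nabla w=0$ a.e.\ on $\{w=0\}$, and then iterate to get $D^2w=0$ a.e.\ there; hence $\Delta u_j=\Delta u_{j+1}$ a.e.\ on $\{u_j=u_{j+1}\}$ with no case analysis. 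The paper itself gives no proof and simply cites \cite{CV}, so either fix is acceptable; but as written your argument establishes the second identity only on the subset where exactly two membranes touch.
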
 
\begin{rem}
As a simple corollary, in $\{u_{k-1}<u_{k}=u_{k+1}=\dots=u_{k+m}<u_{k+m+1}\}$, $$\Delta(\sum_{j=k,k+1,\dots,k+m}u_j)=\sum_{j=k,k+1,\dots,k+m}f_j.$$

Also, $$\Delta(\sum_{j=1,2,\dots.N}u_j)=\sum_{j=1,2,\dots,N}f_j \text{ in $\Omega$}.$$
\end{rem} 

Due to the local nature of our estimates, we take $\Omega$ to be the unit ball and $K=B_{1/2}$.  Up to a scaling, we will also assume, in this section,$$\sum_{j=1}^N(\|u_j\|_{\mathcal{L}^{\infty}(B_1)}+\|f_j\|_{C^{\alpha}(B_1)})=1.$$

Under this normalisation,  elliptic regularity and Proposition 2.3 give: \begin{lem}
$$|\Delta u_j|\le C$$ and $$\|\sum u_j\|_{C^{2,\alpha}(B_{7/8})}\le C$$ for some universal $C$. 
\end{lem}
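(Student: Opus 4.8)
The plan is to establish the two assertions of Lemma 2.5 separately, each by combining Proposition 2.3 with classical elliptic regularity under the stated normalisation $\sum_{j=1}^N(\|u_j\|_{\mathcal{L}^\infty(B_1)}+\|f_j\|_{C^\alpha(B_1)})=1$.

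For the bound $|\Delta u_j|\le C$, I would first recall from Chipot and Vergara-Caffarelli \cite{CV} that each $u_j\in W^{2,p}_{loc}(B_1)$ for all $p<\infty$, so $\Delta u_j$ is a genuine $\mathcal{L}^p$ function and Proposition 2.3 applies pointwise a.e. The key observation is that, at almost every point $x\in B_1$, $x$ lies in exactly one of the open sets $\{u_{k-1}<u_k=\dots=u_{k+m}<u_{k+m+1}\}$ (interpreting $u_0=+\infty$, $u_{N+1}=-\infty$), and on such a set Proposition 2.3 forces $\Delta u_k=\Delta u_{k+1}=\dots=\Delta u_{k+m}$, with common value equal to the average $\frac{1}{m+1}\sum_{i=k}^{k+m}f_i$ by summing the equations for the sum as in Remark 2.4. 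Since the $f_i$ are given with $\sum\|f_i\|_{C^\alpha}\le 1$, every such average is bounded by a universal constant, hence $|\Delta u_j|\le C$ a.e.\ in $B_1$, and in particular this holds with a universal constant on $B_{7/8}$. (One must be slightly careful that the complement of the union of these open sets has measure zero; this follows because the $u_j$ are continuous — indeed $C^{1,\alpha}$ by the known sub-optimal regularity — so the relevant level sets where strict inequalities degenerate non-trivially are handled by the standard fact that $\Delta u_j = \Delta u_{j+1}$ a.e.\ on $\{u_j=u_{j+1}\}$ already stated in Proposition 2.3.)

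For the estimate $\|\sum u_j\|_{C^{2,\alpha}(B_{7/8})}\le C$, I would invoke the second identity in Remark 2.4: $\Delta(\sum_j u_j)=\sum_j f_j$ holds in all of $\Omega=B_1$ in the $\mathcal{L}^p$ (indeed distributional) sense, with no free boundary jump. Since $\sum_j f_j\in C^\alpha(B_1)$ with norm controlled by $1$, and $\sum_j u_j\in \mathcal{L}^\infty(B_1)$ with norm controlled by $1$, interior Schauder estimates for the Laplacian give $\|\sum_j u_j\|_{C^{2,\alpha}(B_{7/8})}\le C(\|\sum_j f_j\|_{C^\alpha(B_1)}+\|\sum_j u_j\|_{\mathcal{L}^\infty(B_1)})\le C$ for a universal $C$ (the choice of radius $7/8$ inside $1$ fixes the constant in the Schauder estimate in terms of $d$ only, so the bound is universal).

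The step I expect to require the most care is not any deep estimate but the measure-theoretic bookkeeping in the first part: one has to be sure that Proposition 2.3, which is phrased on the open strata $\{u_{j-1}>u_j>u_{j+1}\}$ and on the coincidence sets $\{u_j=u_{j+1}\}$, genuinely pins down $\Delta u_j$ a.e.\ on all of $B_1$, including on boundaries between strata and on sets where several equalities hold simultaneously. The clean way to see this is to note that $W^{2,p}$ functions satisfy $\Delta u_j=\Delta u_{j+1}$ a.e.\ on $\{u_j=u_{j+1}\}$ (since $\nabla(u_j-u_{j+1})=0$ and $D^2(u_j-u_{j+1})=0$ a.e.\ on the level set $\{u_j-u_{j+1}=0\}$), combined with the equation $\Delta u_j=f_j$ on the open set $\{u_{j-1}>u_j>u_{j+1}\}$; chaining these across consecutive indices on each maximal block of equalities yields the common value as a convex combination of the $f_i$, which is then bounded. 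Everything else is a direct citation of \cite{CV} and standard Schauder theory.
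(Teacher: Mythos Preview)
Your proof is correct and follows exactly the approach the paper intends: the paper simply states that the lemma follows from ``elliptic regularity and Proposition 2.3'' under the normalisation, and you have spelled out precisely those details---that $\Delta u_j$ is a.e.\ an average of a block of the $f_i$'s (hence bounded), and that $\sum u_j$ satisfies a global Poisson equation with $C^\alpha$ right-hand side so Schauder applies. Your extra care with the measure-theoretic bookkeeping on the strata is more than the paper provides, but entirely in the same spirit.
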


The next important step is the quadratic growth from contact points. It is essentially due to Lindgren and Razani \cite{LR}. Its proof is presented  here for the convenience of the reader.

\begin{prop}There is a universal constant $C$  such that 
for $j=1,2,\dots, N-1$, $p\in \{u_j=u_{j+1}\}\cap \overline{B_{3/4}}$ and $r<1/4$, one has 
$$\sup_{B_r(p)}(u_j-u_{j+1})\le Cr^2.$$
\end{prop}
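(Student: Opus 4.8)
The plan is to establish the quadratic growth bound by a contradiction/compactness argument combined with the structural information we already have about consecutive differences. First, fix $p \in \{u_j = u_{j+1}\} \cap \overline{B_{3/4}}$ and set $w := u_j - u_{j+1} \ge 0$. The starting point is that $w(p) = 0$, and from the $C^{1,\alpha}$ regularity already available (via \cite{CV} and elliptic theory) together with Proposition 2.3, we also know $\nabla w(p) = 0$: indeed at a contact point where the two membranes touch, the known differentiability forces the gradients to agree, for otherwise $w$ would change sign near $p$. Thus $w \ge 0$, $w(p) = 0$, $\nabla w(p) = 0$, and the goal is the matching upper bound $\sup_{B_r(p)} w \le Cr^2$.

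Second, I would observe what equation $w$ satisfies. On the open set $\{u_j > u_{j+1}\}$ we have $\Delta u_j = f_j$ (by Proposition 2.3, since there $u_j$ is strictly between its neighbours once we are away from higher-multiplicity contact, and in any case $\Delta u_j = f_j$ on $\{u_{j-1} > u_j > u_{j+1}\}$ while on the part of $\{u_j > u_{j+1}\}$ where $u_j$ meets $u_{j-1}$ one uses the sum relation). The crucial difficulty --- and this is exactly the ``phantom jump'' flagged in the introduction --- is that $\Delta u_{j+1}$ is \emph{not} constant on $\{u_j > u_{j+1}\}$: it equals $f_{j+1}$ where $u_{j+1} > u_{j+2}$ but takes a different (averaged) value where $u_{j+1}$ is in contact with lower membranes. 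Hence $w$ does not solve a clean obstacle problem. The resolution, following Lindgren--Razani, is that nonetheless $\Delta w$ is \emph{bounded} on $\{w > 0\}$: both $\Delta u_j$ and $\Delta u_{j+1}$ are bounded by Lemma 2.4, so $|\Delta w| \le C$ on $\{w>0\}$, and moreover $w = 0$ on the complement. So $w$ is a nonnegative function, vanishing to first order at $p$, which is (sub/super)caloric up to bounded right-hand side on its own positivity set.

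Third, with these ingredients the quadratic growth follows from a standard dyadic/rescaling iteration. Suppose the bound fails: then there exist points $p_k$, radii $r_k \to 0$ (or fixed scale with blowing constants) with $S_k := \sup_{B_{r_k}(p_k)} w \gg k\, r_k^2$. Rescale $w_k(x) := S_k^{-1} w(p_k + r_k x)$; then $\sup_{B_1} w_k = 1$, $w_k \ge 0$, $w_k(0) = 0$, $\nabla w_k(0) = 0$, and $|\Delta w_k| \le C r_k^2 / S_k \to 0$ on $\{w_k > 0\}$. Uniform interior estimates (the $w_k$ are bounded, with bounded Laplacian on their positivity sets, hence equicontinuous up to the free boundary by the standard nonnegativity-plus-bounded-Laplacian barrier argument) give a subsequential limit $w_\infty \ge 0$ that is harmonic on $\{w_\infty > 0\}$, vanishes together with its gradient at $0$, and has $\sup_{B_1} w_\infty = 1$. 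Since $w_\infty \ge 0$ and $\Delta w_\infty \le 0$ in the sense of distributions on all of $B_1$ (the positive part contributes nonnegatively across the free boundary, as in the classical obstacle problem), $w_\infty$ is superharmonic and nonnegative with a zero minimum at an interior point $0$; by the strong maximum principle $w_\infty \equiv 0$, contradicting $\sup_{B_1} w_\infty = 1$. The compactness step --- producing equicontinuity up to the free boundary uniformly in $k$, i.e.\ controlling $w_k$ near $\partial\{w_k>0\}$ --- is the main technical obstacle; it is handled by the classical argument that a nonnegative function with bounded Laplacian on its positivity set and zero boundary value on the coincidence set cannot grow faster than a fixed modulus, using comparison with the fundamental-solution barrier $C(|x|^2 - \text{dist}^2)$, which is precisely where boundedness of $\Delta u_j$ and $\Delta u_{j+1}$ (Lemma 2.4) is used in an essential way.
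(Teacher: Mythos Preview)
Your overall architecture---contradiction, rescaling, harmonic limit, strong maximum principle---matches the paper's. But there is a genuine gap in the compactness step, and separately you are making things harder than necessary by restricting the Laplacian bound to $\{w>0\}$.

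\textbf{The gap.} From the failure of the bound you extract $S_k=\sup_{B_{r_k}(p_k)}w\ge k\,r_k^2$ and rescale so that $\sup_{B_1}w_k=1$. You then want to pass to a limit in $B_1$. But to get interior $C^{1,\alpha}$ (or even $C^0$) compactness on $B_1$ you need an $L^\infty$ bound on $w_k$ in a \emph{larger} ball, say $B_2$, and your setup gives none. The barrier argument you sketch addresses regularity near $\partial\{w_k>0\}$, not the possible blow-up of $w_k$ near $\partial B_1$; nothing prevents $\sup_{B_2}w_k\to\infty$. The paper fixes this with a doubling dichotomy: it is enough to show that for every $r$ either $\sup_{B_r}w\le Cr^2$ or $\sup_{B_r}w\le\frac14\sup_{B_{2r}}w$ (iterating the second alternative eventually hits the global $L^\infty$ bound). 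If both fail along a sequence, the second failure gives $\sup_{B_{2r_k}}w<4S_k$, hence $0\le w_k<4$ in $B_2$, and now $|\Delta w_k|\le C r_k^2/S_k\to 0$ in $B_2$ yields $C^{1,\alpha}(B_1)$ compactness by standard elliptic estimates.

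\textbf{The unnecessary complication.} Lemma~2.5 gives $|\Delta u_j|\le C$ \emph{everywhere} (in the $L^p$ sense), not just on the non-coincidence set; hence $|\Delta w|\le C$ globally and $|\Delta w_k|\to 0$ globally in $B_2$. There is therefore no free-boundary issue at all in the compactness, and the limit $w_\infty$ is harmonic in all of $B_1$, nonnegative, with $w_\infty(0)=0$ and $\sup_{B_1}w_\infty=1$---the strong maximum principle finishes. Your discussion of superharmonicity across $\partial\{w_\infty>0\}$ and fundamental-solution barriers is not needed (and the vanishing of $\nabla w(p)$ is not used either).
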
 

\begin{proof}
Due to the boundedness assumption on $u_j$, it suffices to find a  universal $C$ such that for each $r$, either $$\sup_{B_r(p)}(u_j-u_{j+1})\le Cr^2$$ or $$\sup_{B_r(p)}(u_j-u_{j+1})\le \frac{1}{4}\sup_{B_{2r}(p)}(u_j-u_{j+1}).$$

Suppose, on the contrary, such a $C$ does not exist. Then we find sequences of functions $\{u^{\nu}_j\}_{j=1,2,\dots, N}$ and $\{f^{\nu}_j\}_{j=1,2,\dots, N}$ for $\nu\in\mathbb{N}$, where $\{u^{\nu}_j\}_{j=1,2,\dots, N}\in\mathcal{K}$ minimises the energy $\mathcal{F}$ with $\{f^{\nu}_j\}_{j=1,2,\dots, N}$ as forcing terms and satisfies $$\sum_{j=1}^N(\|u_j^\nu\|_{\mathcal{L}^{\infty}(B_1)}+\|f_j^\nu\|_{C^{\alpha}(B_1)})=1,$$ but  \begin{equation*}\sup_{B_{r_\nu}(p_\nu)}(u^\nu_{j_\nu}-u^\nu_{j_{\nu}+1})> \nu r_{\nu}^2\end{equation*}
and \begin{equation*}\sup_{B_{r_\nu}(p_\nu)}(u^\nu_{j_\nu}-u^\nu_{j_{\nu}+1})>\frac{1}{4}\sup_{B_{2r_\nu}(p_\nu)}(u^\nu_{j_\nu}-u^\nu_{j_{\nu}+1})\end{equation*}
for some  $j_\nu\in\{1,2,\dots,N-1\}$, $p_\nu\in\{u^\nu_{j_\nu}=u^\nu_{j_{\nu}+1}\}$ and $r_\nu>0$. 

Boundedness of $u_j^\nu$ and $\nu\to\infty$ imply $r_\nu\to 0$. Also, up to a subsequence, we might assume $j_{\nu}=j_1$ for all $\nu$. 

Define $S_\nu=\sup_{B_{r_\nu}(p_\nu)}(u^\nu_{j_1}-u^\nu_{j_{1}+1})$, and the function $$h^\nu(x)=\frac{1}{S_\nu}(u^\nu_{j_1}(p_\nu+r_\nu x)-u^\nu_{j_{1}+1}(p_\nu+r_\nu x)).$$Then $$h^\nu(0)=0,$$ $$\sup_{B_1}h^\nu=1,$$ $$0\le h^\nu< 4 \text{ in $B_2$,}$$ and $$|\Delta h^\nu|=\frac{(r_{\nu})^2}{S_\nu}|\Delta u_{j_1}^\nu-\Delta u_{j_1+1}^\nu|\le \frac{C}{\nu} \text{ in $B_2$}.$$ Here the constant $C$ is the one from Lemma 2.5. 

Standard elliptic regularity then gives a subsequence $h^\nu\to h$ in $C^{1,\alpha}(B_1)$. The limit $h$ satisfies $\Delta h=0$ and $h\ge 0$ in $B_1$. Also $h(0)=0$ and $\sup_{B_1}h=1$, which contradicts the strong maximum principle for harmonic functions. 
\end{proof} 

\begin{rem}
{\em Heuristically}, at such a contact point $p$, $C^{1,\alpha}$-regularity of $\{u_j\}$ gives $u_j(p)=u_{j+1}(p)$ and $\nabla u_j(p)=\nabla u_{j+1}(p).$ $u_j\ge u_{j+1}$ then forces $D^{2}u_j(p)\ge D^{2}u_{j+1}(p)$, while the previous proposition gives $D^2u_j(p)\le D^2u_{j+1}(p)+C.$

This would give very strong implications at contact points of the highest multiplicity, namely,  $p\in\cap_{j=1,2,\dots, N-1}\{u_{j}=u_{j+1}\}\cap B_{1/2}$ since there one has $$|D^{2}u_j(p)-D^2u_{j+1}(p)|\le C \text{ for all $j\le N-1$}.$$ This combined with the fact from Lemma 2.5 $$|\sum D^2u_j(p)|\le C$$ gives $|D^2u_j(p)|\le C$ for all $j$, which is the desired $C^{1,1}$ estimate at $p\in\cap_{j=1,2,\dots, N-1}\{u_{j}=u_{j+1}\}\cap B_{1/2}$.

For points outside the full contact region, in a neighbourhood one sees fewer membranes, this allows a proof  by induction on the number of membranes $N$. The base case $N=2$ is dealt with in \cite{Sil}.\end{rem}

\begin{prop}
For $p\in B_{1/2}$, $|D^2u_j(p)|\le C$ for some universal $C$. 
\end{prop}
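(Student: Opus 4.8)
The plan is to prove Proposition~2.7 by induction on the number of membranes $N$, following the heuristic laid out in Remark~2.6. The base case $N=2$ is Silvestre's result from \cite{Sil}, where $u_1-u_2$ solves the obstacle problem and classical theory gives $C^{1,1}$. For the inductive step, assume the estimate (with a universal constant) for every configuration of at most $N-1$ membranes, and consider a point $p\in B_{1/2}$. The key dichotomy is whether $p$ lies on the full-multiplicity contact set $\cap_{j=1}^{N-1}\{u_j=u_{j+1}\}$ or not.

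If $p$ is \emph{not} a full-multiplicity contact point, then there is some index $k$ with $u_k(p)>u_{k+1}(p)$. By the already-established $C^{1,\alpha}$ regularity (from \cite{CV}, or from Proposition~2.5 combined with $W^{2,p}$ bounds), the functions $u_j$ are continuous, so $u_k>u_{k+1}$ in a whole ball $B_\rho(p)$ with $\rho$ quantitatively bounded below in terms of $u_k(p)-u_{k+1}(p)$ and the universal modulus of continuity. Inside $B_\rho(p)$ the family $\{u_j\}$ splits into two independent subconfigurations $\{u_1,\dots,u_k\}$ and $\{u_{k+1},\dots,u_N\}$, each a solution of a membranes problem with strictly fewer than $N$ members, each subject to its own boundary data on $\partial B_\rho(p)$. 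Rescaling $B_\rho(p)$ to a unit ball and applying the inductive hypothesis on the concentric half-ball gives $|D^2u_j(p)|\le C$, with $C$ depending on $\rho$, hence ultimately on $p$ through the gap $u_k(p)-u_{k+1}(p)$ — which is the source of the difficulty discussed below.

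If $p$ \emph{is} a full-multiplicity contact point, we run the Hessian-comparison argument rigorously. Since $u_j,u_{j+1}$ agree to first order at $p$ (by $C^{1,\alpha}$), Proposition~2.5 upgrades to $u_j-u_{j+1}\le C|x-p|^2$ near $p$; this is a genuine quadratic bound, and together with the one-sided Hessian bounds one extracts (via second-order Taylor expansion in the $W^{2,p}$ sense, or by testing against the Pucci-type inequalities $\Delta u_j = f_j$ or $\Delta u_j = \Delta u_{j+1}$ a.e.) the two-sided estimate $\|D^2(u_j-u_{j+1})\|_{L^\infty(B_{1/4}(p)\cap\{\text{points of twice-differentiability}\})}\le C$ for each $j\le N-1$. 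Summing the telescoping differences against the $C^{2,\alpha}$ control of $\sum u_j$ from Lemma~2.5 yields $|D^2u_j(p)|\le C$ with a \emph{universal} $C$ at every such $p$.

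The main obstacle is making the non-full-multiplicity case produce a \emph{universal} constant, uniform over all of $B_{1/2}$, rather than one degenerating as $p$ approaches the full-multiplicity set. The resolution is a continuity/covering argument: the full-multiplicity set is closed, on it we already have the universal bound, and one shows that the bound propagates to a fixed neighborhood. Concretely, one argues that if $|D^2u_j|$ were not universally bounded on $B_{1/2}$, a compactness/blow-up argument (rescale at points $p_\nu$ where $|D^2u_{j_\nu}(p_\nu)|\to\infty$, normalize, pass to a limit) would produce a nonconstant global solution with zero right-hand side that still satisfies the full-multiplicity constraints in the limit — contradicting either the inductive hypothesis (if the limit separates into fewer membranes) or the rigidity of the full-multiplicity case. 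I expect the bookkeeping in this compactness step — tracking which membranes stay glued in the limit and invoking the correct case of the induction — to be the technically delicate part; everything else is routine elliptic estimates and the algebra of the Hessian inequalities.
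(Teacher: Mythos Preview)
Your inductive framework and your treatment of the full-multiplicity case are essentially the paper's. The real divergence is in the non-full-multiplicity case, and there your proposed compactness/blow-up route is both less direct and not fully justified: you never specify the rescaling that would keep the Hessian finite in the limit while forcing the right-hand side to zero, and it is not clear what global rigidity statement would furnish the contradiction once the $f_j$ vanish.

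The paper sidesteps this entirely by choosing a different radius. Instead of the gap-based $\rho$ (which, as you note, gives a $p$-dependent constant), it groups the indices by which membranes touch at $p$, and for each gap between groups sets $R_k=\mathrm{dist}\bigl(p,\partial\{u_{j_k}>u_{j_k+1}\}\bigr)$; let $R_{k^*}$ be the largest of these. On $B_{R_{k^*}}(p)$ every between-group pair is separated, so the problem splits into fewer-membrane problems there. The key point is that there is a contact point $y_k\in\partial B_{R_k}(p)$ with $u_{j_k}(y_k)=u_{j_k+1}(y_k)$, and Proposition~2.6 applied at $y_k$ gives $0\le u_{j_k}-u_{j_k+1}\le C R_k^2\le C R_{k^*}^2$ on $B_{R_{k^*}}(p)$; combined with the within-group quadratic bounds (also from Proposition~2.6) one gets $0\le u_j-u_{j+1}\le C R_{k^*}^2$ for \emph{all} $j$. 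After rescaling $B_{R_{k^*}}(p)$ to the unit ball, the rescaled differences $v_s$ therefore have \emph{universal} $L^\infty$ bound, the rescaled forcing has $C^\alpha$ norm no larger than the original, and the inductive hypothesis yields $|D^2 u_s(p)-D^2 u_{j_{k^*}}(p)|\le C$ with a universal $C$ directly---no compactness needed. Combining with $|\sum_j D^2 u_j(p)|\le C$ finishes. The missing idea in your proposal is precisely this choice of scale: use the distance to the nearest contact point of the separating pair, not the size of the gap, so that Proposition~2.6 controls the $L^\infty$ norm of the rescaled problem.
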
 

\begin{proof}
Depending on the contacting situation between membranes, we divide $\{1,2,\dots,N\}$ into groups, so that membranes within the same group contact at $p$, while different groups are strictly ordered. That is, we find some $\gamma\in \{1,2,\dots,N\}$ and integers $$j_0=1\le j_1<j_2<\cdots<j_{\gamma-1}<j_{\gamma}=N$$ such that $$u_j(p)=u_{j+1}(p) \text{ for each $j_{k}+1\le j\le j_{k+1}$,}$$ and $$u_{j_k}(p)> u_{j_k+1}(p) \text{ for each $k=1,2,\dots,\gamma-1$}.$$

If it happens that $\gamma=1$, namely, all membranes touch at $p$,  then Remark 2.7 gives the desired estimate. Thus it suffices to deal with the case where $\gamma>1$.

Within each group, Proposition 2.6 gives \begin{equation}
0\le u_{j}-u_{j+1}\le CR^2 \text{ in $B_{R}(p)$ for  $j_{k}+1\le j\le j_{k+1}.$}
\end{equation} 

For each $k=1,2,\dots,\gamma-1$, define $$R_k=dist(p,\partial\{u_{j_k}>u_{j_k+1}\}).$$ Then in $B_{R_k}(p)$, $u_{j_k}>u_{j_k+1}$, and there exists some $y_k\in\partial B_{R_k}(p)$ such that $u_{j_k}(y_k)=u_{j_k+1}(y_k)$. Proposition 2.6 gives the estimate between groups $$0\le u_{j_k}-u_{j_k+1}\le CR_k^2 \text{ in $B_{R_k}(p)$.}$$

Pick $k^*$ such that $R_{k^*}\ge R_k$ for all $k$, then for all $k=1,2,\dots,\gamma-1$, $$0\le u_{j_k}-u_{j_k+1}\le CR_{k^*}^2 \text{ in $B_{R_{k^*}}(p)$.}$$ Combining with the estimate within the same group (2.1), one obtains \begin{equation}
0\le u_j-u_{j+1}\le CR_{k^*}^2 \text{ for all $j$}.
\end{equation}

Now for $s\le j_{k^*}$, define $$v_s(x):=\frac{1}{R_{k^*}^2}(u_s(p+R_{k^*}x)-u_{j_{k^*}}(p+R_{k^*}x)).$$ Then $\{v_s\}_{s=1,\dots, j_{k^*}-1}$ solves the multiple membranes problem in $B_1$ with forcing terms $(f_s(p+R_{k^*}\cdot)-f_{j_{k^*}}(p+R_{k^*}\cdot))$.  

The fact $R_{k^*}\le 1$ implies that the $C^{\alpha}$ norm of such forcing terms is less than the sum of $C^{\alpha}$ norms of $f_s$ and of $f_{j_{k^*}}$, which is universally bounded.  Meanwhile, (2.2) gives $0\le v_s\le C$ in $B_1$.  As a result, the induction hypothesis on $N$, applied to $\{v_s\}$ gives $$|D^2v_s|(0)\le C,$$ which in turn implies \begin{equation*}
|D^2u_s(p)-D^2u_{j_{k^*}}(p)|\le C \text{ for all $s< j_{k^*}$}. 
\end{equation*} 

A symmetric argument applies to $s>  j_{k^*}$ and gives \begin{equation*}
|D^2u_s(p)-D^2u_{j_{k^*}}(p)|\le C \text{ for all $s> j_{k^*}$}. 
\end{equation*} 

Combining these with $|\sum D^2u_j(p)|\le C$ gives $$|D^2u_{j_k^*}(p)|\le C.$$ This in turn implies $$|D^2u_j(p)|\le C \text{ for all $j$}.$$ \end{proof}

\section{ Weiss-type monotonicity formula and homogeneity of blow-up solutions}
Starting from this section we turn our attention to the free boundary, and in particular, the piece of {\em the highest multiplicity}, namely, points in  $\cap_{j=1,2,\dots, N-1}\partial\{u_j>u_{j+1}\}$. This piece captures the essential difficulty as explained in the Introduction. To do this, it does no harm to subtract the average of the membranes and as a result to assume (NA).  We also assume (CF) to concentrate on the main ideas. 

Under these assumptions, the first result in this section is the monotonicity of the  Weiss-type energy as in Theorem 1.2. To be precise, we are dealing with the functional:
\begin{defi}
For $u_j\in C^{1}(\Omega)$ and $f_j\in\mathcal{L}^\infty(\Omega)$ $j=1,2,\dots, N$, $p\in\Omega$ and $0<r<dist(p,\partial\Omega)$, the Weiss-type energy of $\{u_j\}_{j=1,2,\dots, N}$ at $p$ and scale $r$ is $$\Fpr=\EFpr.$$
\end{defi} 

\begin{rem}
Since we have already established interior regularity, this functional is well-defined for our solutions. Moreover, it is not too difficult to prove that this functional is differentiable in $r$. \end{rem}
 
Now we begin the proof for Theorem 1.2, translation-invariance of the functional allows us to consider the following:

\begin{prop}
Solutions to the $N$-membranes problem in $B_1$ under (CF) satisfies $$\frac{d}{dr}|_{r=r_0}\For \ge \frac{1}{r_0^{d+2}}\int_{\partial B_{r_0}(0)}\sum(\frac{\partial}{\partial r}u_j-2u_j/r_0)^2$$ for all $r_0\le 1$.  
\end{prop}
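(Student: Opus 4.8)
The plan is to follow the classical Weiss scheme, adapted to the sum structure of the $N$-membranes functional. First I would introduce the rescalings $u_j^r(x) := r^{-2} u_j(rx)$ and observe that, by the change of variables $x \mapsto rx$, the Weiss energy can be rewritten as a fixed-domain functional of $u_j^r$:
\[
\For = \int_{B_1(0)}\sum\bigl(|\nabla u_j^r|^2 + 2 f_j u_j^r\bigr) - 2\int_{\partial B_1(0)}\sum (u_j^r)^2 ,
\]
where here one uses crucially that the forcing terms are constants, so that $f_j(rx) = f_j$ and the only $r$-dependence sits in $u_j^r$ (the powers $r^{d+2}$, $r^{d+3}$ in the definition are exactly tuned so that the $2$-homogeneous scaling of $u_j$ is the critical one). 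Differentiating in $r$ then reduces to differentiating $u_j^r$, and $\frac{d}{dr}u_j^r(x) = r^{-2}\bigl(x\cdot\nabla u_j(rx) - \tfrac{2}{r}u_j(rx)\bigr)$, which after undoing the scaling becomes $\frac{1}{r}\bigl(\partial_\rho u_j - \tfrac{2}{\rho}u_j\bigr)$ evaluated at radius $\rho = r|x|$. I would denote this quantity $z_j := \partial_\rho u_j - \tfrac{2}{\rho}u_j$; it is precisely the defect measuring failure of $2$-homogeneity, and it is what must appear squared on the right-hand side.

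The next step is to compute $\frac{d}{dr}\For$ explicitly. Carrying out the differentiation and using $\frac{d}{dr}u_j^r = \frac1r z_j^r$ (with $z_j^r$ the rescaling of $z_j$), the derivative of the Dirichlet-plus-forcing bulk term produces $2\int_{B_1}\sum(\nabla u_j^r\cdot\nabla(\tfrac1r z_j^r) + f_j \tfrac1r z_j^r)$, and the derivative of the boundary term produces $-4\int_{\partial B_1}\sum u_j^r \tfrac1r z_j^r$ plus the contribution from differentiating the domain/measure. Integrating the bulk Dirichlet term by parts moves the Laplacian onto $u_j^r$; here is where Proposition 2.3 and Remark 2.4 enter: $\Delta u_j^r$ need not equal $f_j$ pointwise, but summing over $j$ we have $\Delta(\sum u_j^r) = \sum f_j$, and more generally the identity $\Delta u_j = \Delta u_{j+1}$ on contact sets lets us control $\sum(\Delta u_j - f_j)\,z_j^r$. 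The key algebraic point — this is the analogue of the "obstacle competitor" in Weiss's argument — is that on the contact set $\{u_k = \dots = u_{k+m}\}$ all the $z_j$ coincide (since the $u_j$ coincide there and hence so do their radial derivatives), so $\sum_j (\Delta u_j - f_j) z_j = \sum_{\text{groups}} z_{\text{group}}\bigl(\Delta(\sum_{\text{group}} u_j) - \sum_{\text{group}} f_j\bigr) = 0$ by Remark 2.4. After all boundary terms are collected, the cross terms combine into a perfect square: $\frac{d}{dr}\For = \frac{2}{r}\int_{\partial B_1}\sum (z_j^r)^2 \ge 0$ (up to the correct power of $r$), which upon rescaling back to radius $r_0$ is exactly the claimed inequality $\frac{1}{r_0^{d+2}}\int_{\partial B_{r_0}(0)}\sum(\partial_r u_j - 2u_j/r_0)^2$.

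Rather than the rescaling bookkeeping, a cleaner route I might instead take is to work directly at scale $r$: write $\For$ with its explicit $r$-powers, differentiate term by term in $r$ (using $\frac{d}{dr}\int_{B_r}g = \int_{\partial B_r}g$ and $\frac{d}{dr}\int_{\partial B_r}g = \int_{\partial B_r}(\partial_r g + \tfrac{d-1}{r}g)$), and organize the resulting boundary integrals over $\partial B_r$. The two approaches are equivalent; the virtue of the first is that the "perfect square" structure is transparent, while the second avoids introducing auxiliary notation. The main obstacle — and the only place where the $N$-membranes structure does real work, as opposed to the classical obstacle problem — is justifying the vanishing of $\sum_j(\Delta u_j - f_j)z_j$: one must argue that the $W^{2,p}$/$C^{1,1}$ regularity of the solution makes all manipulations (integration by parts, differentiation under the integral) legitimate, and that the pointwise identities of Proposition 2.3 and Remark 2.4 hold a.e. so that the distributional integration by parts produces no singular contributions concentrated on the free boundary. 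Everything else is the standard Weiss computation. I would also remark that equality forces $z_j \equiv 0$ on every sphere, i.e. each $u_j$ is $2$-homogeneous about $p$, which is the statement used to deduce Theorem 1.3.
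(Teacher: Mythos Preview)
Your approach is correct but takes a genuinely different route from the paper's. You push the computation through the Euler--Lagrange system: after integrating the bulk Dirichlet term by parts, the interior contribution $\sum_j(\Delta u_j - f_j)z_j$ vanishes a.e.\ because on each open configuration stratum the $z_j$ within a contact group coincide (since $u_j-u_{j+1}\ge 0$ is $C^1$ and attains its minimum there, forcing $\nabla u_j=\nabla u_{j+1}$) while $\sum_{j\in G}(\Delta u_j-f_j)=0$ by Remark~2.4; the free boundaries have null measure and contribute nothing. Carried out carefully this in fact yields an \emph{equality} $\frac{d}{dr}W=\frac{2}{r^{d+2}}\int_{\partial B_r}\sum(\partial_r u_j-2u_j/r)^2$, sharper than the inequality stated.

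The paper, by contrast, never invokes the Euler--Lagrange equations. It first proves (Lemma~3.4)
\[
(d+2)\int_{B_1}\sum(|\nabla u_j|^2+2f_ju_j)\le \int_{\partial B_1}\sum(|\nabla_\tau u_j|^2+2f_ju_j+4 u_j^2)
\]
by comparing $\{u_j\}$ against an explicit admissible competitor: the $2$-homogeneous extension of $u_j|_{\partial B_1}$ in the thin annulus $B_1\setminus B_{1-\epsilon}$, glued to a rescaled copy of $\{u_j\}$ inside, and then sends $\epsilon\to 0$. Minimality alone gives this inequality; a direct differentiation of $W$ at $r=1$ (Lemma~3.5) combined with the scaling identity $W(\{u_j\},0,r)=W(\{T_{0,r}u_j\},0,1)$ then transfers it to all radii. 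The competitor argument is more robust---it uses only the variational structure, not the pointwise PDE or the $C^{1,1}$ regularity needed to justify your integration by parts---while your route is computationally cleaner, delivers the sharp constant, and isolates precisely where the multi-membrane structure enters through the cancellation identity for $\sum_j(\Delta u_j-f_j)z_j$.
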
 Again $\frac{\partial}{\partial r}$ denotes the derivative in the radial direction.

This would be the consequence of the following lemmata.

\begin{lem}
Under the assumptions in Proposition 3.1, one has $$(d+2)\int_{B_1}\sum(|\nabla u_j|^2+2f_ju_j)\le \int_{\partial B_1}\sum(|\TDer u_j|^2+2f_ju_j+4 u_j^2).$$
\end{lem}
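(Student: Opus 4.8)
The plan is to exploit the minimality of $\{u_j\}$ against a radial competitor, in the spirit of Weiss's original argument. Consider the rescaled functions $w_j(x) := |x|^2 u_j(x/|x|)$ on $B_1$, i.e. the degree-$2$ homogeneous extension of the boundary data $u_j|_{\partial B_1}$; more precisely, in polar coordinates $w_j(\rho\omega) = \rho^2 u_j(\omega)$ for $\rho\in(0,1]$, $\omega\in\partial B_1$. Since the ordering $u_1\ge u_2\ge\dots\ge u_N$ on $\partial B_1$ is preserved under multiplication by the positive factor $\rho^2$, the family $\{w_j\}$ lies in the admissible class $\mathcal{K}$ and agrees with $\{u_j\}$ on $\partial B_1$. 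Hence $\mathcal{F}(\{u_j\})\le \mathcal{F}(\{w_j\})$ over $B_1$, which is the inequality I will unpack.

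The key step is to compute $\mathcal{F}(\{w_j\})$ explicitly in polar coordinates. Writing $\nabla w_j = (\partial_\rho w_j)\,\hat\rho + \rho^{-1}\nabla_\tau w_j$, one has $\partial_\rho w_j = 2\rho\, u_j(\omega)$ and $\nabla_\tau w_j = \rho^2 \nabla_\tau u_j(\omega) = \rho\,\nabla_\tau u_j$ interpreted on the sphere of radius $\rho$; thus $|\nabla w_j|^2 = 4\rho^2 u_j^2 + \rho^2|\nabla_\tau u_j|^2$ where $u_j, \nabla_\tau u_j$ are evaluated at the angular projection. Integrating $\rho^2(4u_j^2 + |\nabla_\tau u_j|^2)$ against $\rho^{d-1}\,d\rho\,d\omega$ over $\rho\in(0,1)$ produces the factor $\int_0^1 \rho^{d+1}\,d\rho = \tfrac{1}{d+2}$, giving $\int_{B_1}|\nabla w_j|^2 = \tfrac{1}{d+2}\int_{\partial B_1}(|\nabla_\tau u_j|^2 + 4u_j^2)$. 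Similarly $2f_j w_j = 2f_j\rho^2 u_j(\omega)$ integrates to $\tfrac{1}{d+2}\cdot 2f_j\int_{\partial B_1} u_j$ using $\int_0^1\rho^{d+1}d\rho = \tfrac1{d+2}$ again (the power of $\rho$ matches because $w_j$ is degree $2$). Summing over $j$ and multiplying through by $d+2$, the inequality $\mathcal{F}(\{u_j\})\le\mathcal{F}(\{w_j\})$ becomes exactly the claimed estimate, after noting $|\TDer u_j| = |\nabla_\tau u_j|$ on $\partial B_1$.

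The main obstacle, and the point requiring care, is the regularity of the competitor: $w_j$ is only Lipschitz near the origin (it vanishes quadratically but $\nabla w_j$ need not be continuous there), so I must confirm $w_j\in H^1(B_1)$ — which the computation above does, since the energy integral converges — and that the trace of $w_j$ on $\partial B_1$ genuinely equals $u_j|_{\partial B_1}$, which is immediate from $w_j(\omega)=u_j(\omega)$ for $|\omega|=1$. One should also check that $\{w_j\}$ is genuinely admissible, i.e. that each $w_j\in H^1$ and the pointwise ordering holds a.e.; both are clear. A minor additional remark: the argument uses $f_j$ constant (assumption (CF)) only to the extent that the forcing term is a fixed function, so that $\int_{B_1} 2f_j w_j$ is computed directly; no cancellation is needed. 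With these points addressed the lemma follows by the single comparison $\mathcal{F}(\{u_j\})\le \mathcal{F}(\{w_j\})$.
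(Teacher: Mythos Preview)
Your proof is correct and follows the same core idea as the paper: compare $\{u_j\}$ against the degree-$2$ homogeneous extension of its boundary data and read off the inequality from minimality. The paper's version is slightly more elaborate---it builds an $\epsilon$-family of competitors that are homogeneous only in the annulus $B_1\setminus B_{1-\epsilon}$ and equal a rescaled copy of $u_j$ inside, then extracts the inequality from the first-order term in $\epsilon$---but your direct comparison to $w_j(x)=|x|^2u_j(x/|x|)$ on all of $B_1$ reaches the same conclusion more quickly, and your check that $w_j\in H^1$ handles the only potential regularity concern.
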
 Here $\TDer u$ is the tangential derivative of a function $u$. 
\begin{proof}
For a small $\epsilon>0$, we construct the following competitors for the energy $\mathcal{F}$. 

For $j=1,2,\dots, N$, 
$$v_j(x):=\begin{cases} |x|^2u_j(\frac{x}{|x|}) &\text{ for $|x|> 1-\epsilon$,}\\(1-\epsilon)^2u_j(\frac{x}{1-\epsilon}) &\text{ for $|x|\le 1-\epsilon$.}\end{cases}$$

Direct computations yield $$|\nabla v_j|^2(x)=\begin{cases} 4|x|^2u_j^2(\frac{x}{|x|})+|x|^2|\nabla_\tau u_j|^2(\frac{x}{|x|}) &\text{ for $|x|> 1-\epsilon$,}\\(1-\epsilon)^2|\nabla u_j|^2(\frac{x}{1-\epsilon}) &\text{ for $|x|\le 1-\epsilon$.}\end{cases}$$

Therefore, \begin{align*}
\int_{B_{1-\epsilon}}|\nabla v_j|^2&=(1-\epsilon)^2\int_{B_{1-\epsilon}}|\nabla u_j|^2(\frac{x}{1-\epsilon})\\ &=(1-\epsilon)^{d+2}\int_{B_1}|\nabla u_j|^2.
\end{align*}

\begin{align*}
\int_{B_1\backslash B_{1-\epsilon}}|\nabla v_j|^2&=\int_{1-\epsilon}^1ds\int_{\partial B_s}4s^2u_j^2(\frac{x}{|x|})+s^2|\TDer u_j|^2(\frac{x}{|x|})\\&=\int_{1-\epsilon}^1ds\int_{\partial B_1}4s^{d+1}u_j^2+s^{d+1}|\TDer u_j|^2\\&=\frac{4}{d+2}(1-(1-\epsilon)^{d+2})\int_{\partial B_1}u_j^2+\frac{1}{d+2}(1-(1-\epsilon)^{d+2})\int_{\partial B_1}|\TDer u_j|^2.
\end{align*}

Moreover, \begin{align*}
\int_{B_{1-\epsilon}}f_jv_j&=\int_{B_{1-\epsilon}}(1-\epsilon)^2f_ju_j(\frac{x}{1-\epsilon})\\&=(1-\epsilon)^{d+2}\int_{B_1}f_ju_j.
\end{align*}

And \begin{align*}
\int_{B_1\backslash B_{1-\epsilon}}f_jv_j&=f_j\int_{B_1\backslash B_{1-\epsilon}}|x|^2u_j(\frac{x}{|x|})\\&=f_j\int_{1-\epsilon}^1ds\int_{\partial B_s}s^2u_j(\frac{x}{|x|})\\&=f_j\int_{1-\epsilon}^1s^{d+1}ds\int_{\partial B_1}u_j\\&=\frac{1}{d+2}(1-(1-\epsilon)^{d+2})f_j\int_{\partial B_1}u_j.
\end{align*}
 
Here we have used the fact that $f_j$'s are constants. 

Combining all these gives the energy for $v_j$. Meanwhile, since $v_j\ge v_{j+1}$ in $B_1$ and $v_j=u_j$ along $\partial B_1$, these are admissible in the minimisation of $\mathcal{F}$. Minimality of $u_j$ gives \begin{align*}
\int_{B_1}\sum(|\nabla u_j|^2+2f_ju_j)\le & (1-\epsilon)^{d+2}\int_{B_1}\sum(|\nabla u_j|^2+2f_ju_j)+\frac{4}{d+2}(1-(1-\epsilon)^{d+2})\int_{\partial B_1}\sum u_j^2\\+\frac{1}{d+2}&(1-(1-\epsilon)^{d+2})\int_{\partial B_1}\sum|\TDer u_j|^2+\frac{1}{d+2}(1-(1-\epsilon)^{d+2})\int_{\partial B_1}\sum 2f_ju_j.
\end{align*}

This being true for all small $\epsilon$, we can take the linear order terms in $\epsilon$:
$$(d+2)\int_{B_1}\sum(|\nabla u_j|^2+2f_ju_j)\le \int_{\partial B_1}\sum|\TDer u_j|^2+\int_{\partial B_1}\sum 2f_ju_j+4\int_{\partial B_1}\sum u_j^2.$$
\end{proof} 

This lemma in turn implies the desired monotonicity at $r=1$:

\begin{lem}
Under the assumptions of Proposition 3.1, $$\frac{d}{dr}|_{r=1}\For\ge \int_{\partial B_{1}(0)}\sum(\frac{\partial}{\partial r}u_j-2u_j)^2.$$
\end{lem}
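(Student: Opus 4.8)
The plan is to compute $\frac{d}{dr}\big|_{r=1}\For$ explicitly and then invoke Lemma 3.4 to control the resulting terms. Writing $I(r) = \frac{1}{r^{d+2}}\int_{B_r(0)}\sum(|\nabla u_j|^2+2f_ju_j)$ and $J(r) = \frac{2}{r^{d+3}}\int_{\partial B_r(0)}\sum u_j^2$, so that $\For = I(r) - J(r)$, I would differentiate each piece at $r=1$. For $I$, the prefactor contributes $-(d+2)\int_{B_1}\sum(|\nabla u_j|^2+2f_ju_j)$ and differentiating the domain integral contributes the boundary term $\int_{\partial B_1}\sum(|\nabla u_j|^2+2f_ju_j)$; on the sphere we split $|\nabla u_j|^2 = |\TDer u_j|^2 + (\frac{\partial}{\partial r}u_j)^2$. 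For $J$, one gets a prefactor term $2(d+3)\int_{\partial B_1}\sum u_j^2$ together with the derivative of the surface integral, which by the standard formula for differentiating $\int_{\partial B_r}$ (rescaling to the unit sphere, differentiating, rescaling back) produces $-2\int_{\partial B_1}\sum(2u_j\frac{\partial}{\partial r}u_j + (d-1)u_j^2)$; assembling these, the $u_j^2$ coefficients combine to $-4\int_{\partial B_1}\sum u_j^2$ and one is left with a cross term $-4\int_{\partial B_1}\sum u_j\frac{\partial}{\partial r}u_j$.

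Collecting everything, I expect
\begin{equation*}
\frac{d}{dr}\Big|_{r=1}\For = \int_{\partial B_1}\sum\Big(\big(\tfrac{\partial}{\partial r}u_j\big)^2 - 4 u_j\tfrac{\partial}{\partial r}u_j + 4 u_j^2\Big) + \Big[\int_{\partial B_1}\sum\big(|\TDer u_j|^2 + 2f_ju_j + 4u_j^2\big) - (d+2)\int_{B_1}\sum\big(|\nabla u_j|^2+2f_ju_j\big)\Big].
\end{equation*}
The first integral is exactly $\int_{\partial B_1}\sum(\frac{\partial}{\partial r}u_j - 2u_j)^2$, the desired right-hand side. The bracketed remainder is precisely the quantity that Lemma 3.4 asserts is nonnegative, so the inequality follows at once. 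Thus the proof is: differentiate, regroup the spherical integrand into the perfect square plus the Lemma 3.4 expression, and conclude.

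The main obstacle I anticipate is purely bookkeeping: getting the derivative of the surface-integral term $\frac{1}{r^{d+3}}\int_{\partial B_r(0)}\sum u_j^2$ right, since one must carefully track the Jacobian factor $r^{d-1}$ from the sphere of radius $r$, the chain-rule contribution $\frac{\partial}{\partial r}(u_j^2) = 2u_j\frac{\partial}{\partial r}u_j$, and the algebraic prefactor, and then verify that the various $\int_{\partial B_1}\sum u_j^2$ contributions (coefficients $4$ from the prefactor of $I$'s boundary term, $2(d+3)$ from $J$'s prefactor, $-2(d-1)$ from differentiating the sphere) really do cancel down to leave exactly $+4$, matching the constant term of the perfect square $(\frac{\partial}{\partial r}u_j-2u_j)^2 = (\frac{\partial}{\partial r}u_j)^2 - 4u_j\frac{\partial}{\partial r}u_j + 4u_j^2$. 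Once the algebra is organized so that the perfect square appears cleanly, the only non-elementary input is Lemma 3.4, which supplies the sign of the leftover bulk-versus-boundary comparison.

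One remark on rigor: to differentiate under the integral sign and to justify the formula for $\frac{d}{dr}\int_{\partial B_r}$, I would use the $C^{1,1}$ regularity from Theorem 1.1 (equivalently Proposition 2.8), which guarantees $u_j \in C^{1}$ with bounded second derivatives, so all integrands are at least Lipschitz and the differentiation is legitimate; this is the content of the parenthetical assertion in Remark 3.2 that $\For$ is differentiable in $r$. With that in hand the argument above is complete.
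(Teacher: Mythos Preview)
Your proposal is correct and follows essentially the same approach as the paper: split the Weiss energy into the solid part and the spherical part, differentiate each at $r=1$, and apply Lemma~3.4 to the bulk-versus-boundary difference so that what remains is the perfect square $\int_{\partial B_1}\sum(\partial_r u_j-2u_j)^2$. Your displayed identity for $\frac{d}{dr}\big|_{r=1}\For$ is exactly right; the only cosmetic difference is that the paper simplifies $J(r)=\frac{2}{r^{d+3}}\int_{\partial B_r}\sum u_j^2=\frac{2}{r^4}\int_{\partial B_1}\sum u_j^2(r\cdot)$ before differentiating (so the Jacobian $r^{d-1}$ is absorbed into the prefactor), whereas you track the $r^{d-1}$ and $r^{-(d+3)}$ separately. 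A minor remark: the informal bookkeeping sentence about the $u_j^2$ coefficients (``$4$ from the prefactor of $I$'s boundary term, $2(d+3)$ from $J$'s prefactor, $-2(d-1)$ from differentiating the sphere \ldots\ cancel down to leave exactly $+4$'') is garbled---there is no $u_j^2$ term coming from $I'$, and the net coefficient from $-J'$ is $2(d+3)-2(d-1)=8$, which then splits as $4+4$ between the perfect square and the Lemma~3.4 remainder---but your final formula has the right constants, so this does not affect the argument.
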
 

\begin{proof}
The  Weiss-type energy functional naturally splits into two pieces $$E(r)=\frac{1}{r^{d+2}}\int_{B_r}\sum(|\nabla u_j|^2+2f_ju_j),$$ and $$F(r)=\frac{2}{r^{d+3}}\int_{\partial B_r}\sum u_j^2.$$

We differentiate each piece separately. 

\begin{align*}
\frac{d}{dr} E(r)&=\frac{-(d+2)}{r^{d+3}}\int_{B_r}\sum(|\nabla u_j|^2+2f_ju_j)+\frac{1}{r^{d+2}}\int_{\partial B_r}\sum(|\nabla u_j|^2+2f_ju_j)\\&=\frac{-(d+2)}{r^{d+3}}\int_{B_r}\sum(|\nabla u_j|^2+2f_ju_j)+\frac{1}{r^{d+2}}\int_{\partial B_r}\sum(|\nabla_\tau u_j|^2+2f_ju_j)+\frac{1}{r^{d+2}}\int_{\partial B_r}\sum|\RDer u_j|^2.
\end{align*} At $r=1$, one has \begin{align*}\frac{d}{dr}|_{r=1} E(r)&=-(d+2)\int_{B_1}\sum(|\nabla u_j|^2+2f_ju_j)+\int_{\partial B_1}\sum(|\nabla_\tau u_j|^2+2f_ju_j)+\int_{\partial B_1}\sum|\RDer u_j|^2\\&\ge\int_{\partial B_1}\sum|\RDer u_j|^2-4\int_{\partial B_1}\sum u_j^2\end{align*} by Lemma 3.4.

Meanwhile, \begin{align*}
\frac{d}{dr} F(r)&=\frac{d}{dr} (\frac{2}{r^{4}}\int_{\partial B_1}\sum u^2_j(r\cdot))\\&=\frac{-8}{r^5}\int_{\partial B_1}\sum u^2_j(r\cdot)+\frac{2}{r^4}\int_{\partial B_1}\sum 2u_j(r\cdot)\RDer u_j(r\cdot).
\end{align*} Again plug in $r=1$ gives
$$\frac{d}{dr}|_{r=1} F(r)=-8\int_{\partial B_1}\sum u^2_j+2\int_{\partial B_1}\sum 2u_j\RDer u_j.$$

To conclude, simply note \begin{align*}
\frac{d}{dr}|_{r=1}\For&=\frac{d}{dr}|_{r=1} E(r)-\frac{d}{dr}|_{r=1} F(r)\\&\ge \int_{\partial B_1}\sum|\RDer u_j|^2-4\int_{\partial B_1}\sum u_j^2+8\int_{\partial B_1}\sum u^2_j-2\int_{\partial B_1}\sum 2u_j\RDer u_j\\&=\int_{\partial B_1}\sum|\RDer u_j|^2-4\int_{\partial B_1}\sum u_j\RDer u_j+4\int_{\partial B_1}\sum u_j^2\\&=\int_{\partial B_{1}}\sum(\frac{\partial}{\partial r}u_j-2u_j)^2.
\end{align*}\end{proof} 

Now to finish the proof for Proposition 3.1, we use the following symmetry of the  Weiss-type energy: \begin{equation}
\For=W(\{\Torj\},0,1).
\end{equation} And in particular, \begin{align*}
\frac{W(\{u_j\},0,r+h)-\For}{h}&=\frac{W(\{\Torj\},0,1+h/r)-W(\{\Torj\},0,1)}{h}\\&=\frac{W(\{\Torj\},0,1+h/r)-W(\{\Torj\},0,1)}{h/r}\frac{1}{r}.
\end{align*}Sending $h\to 0$ gives $$\frac{d}{dr}\For=\frac{1}{r}\frac{d}{ds}|_{s=1}W(\{\Torj\},0,s)\ge\frac{1}{r}\int_{\partial B_1}\sum(\RDer \Torj-2\Torj)^2,$$ from which the desired estimate follows by a change of variable. 

This concludes the proof for Proposition 3.1, which implies Theorem 1.2.

As a simple corollary of this monotonicity, we obtain the homogeneity for solutions with constant  Weiss-type energy:
\begin{cor}
Under the assumptions of Proposition 3.1, if $\For$ is constant in $r$, then $u_j$'s are homogeneous of degree 2. 
\end{cor}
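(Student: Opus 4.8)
The plan is to exploit the sharp form of the monotonicity inequality in Proposition~3.1: the derivative of the Weiss energy dominates the nonnegative boundary quantity $\frac{1}{r^{d+2}}\int_{\partial B_r}\sum(\RDer u_j-2u_j/r)^2$. If $\For$ is constant in $r$ on an interval $(0,1]$, then $\frac{d}{dr}\For\equiv 0$, so the right-hand side must vanish identically, forcing
\begin{equation*}
\RDer u_j(x)=\frac{2}{|x|}u_j(x)\quad\text{for a.e. }x\in B_1\setminus\{0\},\ j=1,2,\dots,N.
\end{equation*}
This is precisely the ODE along each ray whose solution is $r\mapsto r^2u_j(\omega)$; integrating it shows $u_j(r\omega)=r^2u_j(\omega)$ for all $r\in(0,1]$ and all directions $\omega$, i.e. each $u_j$ is homogeneous of degree $2$.

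Concretely, the steps I would carry out are: (i) record that Proposition~3.1 gives, for every $r_0\le 1$, the inequality $\frac{d}{dr}\big|_{r=r_0}\For\ge \frac{1}{r_0^{d+2}}\int_{\partial B_{r_0}}\sum(\RDer u_j-2u_j/r_0)^2\ge 0$; (ii) since $\For$ is assumed constant, its derivative is $0$ for every $r_0$, hence the integrand, being a sum of squares of continuous functions (here we use the $C^{1,1}$ regularity from Theorem~1.1, so $\RDer u_j$ is continuous up to $\partial B_{r_0}$), vanishes on all of $\partial B_{r_0}$; (iii) letting $r_0$ range over $(0,1]$ gives $\RDer u_j \equiv \frac{2}{|x|}u_j$ throughout the punctured ball; (iv) fix $\omega\in\partial B_1$ and set $g(r)=u_j(r\omega)$; then $g'(r)=\RDer u_j(r\omega)=\frac{2}{r}g(r)$, so $\frac{d}{dr}\big(r^{-2}g(r)\big)=0$, whence $g(r)=r^2g(1)$; this holds for every $\omega$, which is the claimed degree-$2$ homogeneity.

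The only mild subtlety is regularity near the origin: the ODE argument is run on each ray away from $0$, and continuity of $u_j$ at $0$ (indeed $u_j(0)=0$ and $\nabla u_j(0)=0$, since $0$ is a free boundary point of highest multiplicity and $u_j\in C^{1,1}$) lets one extend the homogeneity relation down to $r=0$. One should also note that differentiability of $\For$ in $r$ was already remarked (Remark~3.3), so ``constant'' legitimately yields ``derivative zero''. Beyond this, there is no real obstacle — the corollary is a direct squeeze: a monotone quantity that does not move must have its monotonicity defect identically zero, and here that defect is exactly the deviation from degree-$2$ homogeneity. I would expect the write-up to be three or four lines.
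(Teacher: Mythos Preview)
Your proposal is correct and follows essentially the same approach as the paper: the paper's proof is the two-line statement that constancy forces $(\RDer u_j-2u_j/r)^2=0$ a.e., i.e.\ $\frac{d}{d\lambda}\frac{u_j(\lambda x)}{\lambda^2}=0$. You have simply spelled out the ODE-along-rays step and the regularity considerations in more detail; the only minor quibble is that the corollary as stated does not assume $0$ is a highest-multiplicity free boundary point, but you only use continuity of $u_j$ at $0$, which already follows from $C^{1,1}$ regularity.
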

\begin{proof}
The constancy condition implies $(\frac{\partial}{\partial r}u_j-2u_j/r)^2=0$ a.e., that is, $\frac{d}{d\lambda}\frac{u_j(\lambda x)}{\lambda^2}=0.$
\end{proof} 

With these, we are in the place to study blow-up solutions at free boundary points of the highest multiplicity.

\begin{defi}
Suppose $\{u_j\}$ solves the $N$-membranes problem in $\Omega$ and $p\in\cap_{j=1,2,\dots,N-1}\partial\{u_j>u_{j+1}\}\cap\Omega$, the blow-up at $p$ at scale $r>0$ is 
$$\Tprj(x):=\frac{1}{r^2}u_j(p+rx).$$
\end{defi} 

\begin{rem}
It should be clear that for each $r>0$, $\{\Tprj\}$ is a minimiser of $\mathcal{F}$ in $B_{R}$ for all $R<dist(p,\partial\Omega)/r$.
\end{rem} 
The first observation is their compactness in $C^{1,\alpha}_{loc}(\mathbb{R}^d)$:
\begin{prop}
For $p\in\cap_{j=1,2,\dots,N-1}\partial\{u_j>u_{j+1}\}\cap\Omega$, up to a subsequence of $r\to 0$, $\{\Tprj\}$ converges in $C^{1,\alpha}_{loc}(\mathbb{R}^d)$ to $\{T_{p,0}u_j\}$, a local minimiser of $\mathcal{F}$.
\end{prop}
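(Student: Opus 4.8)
The plan is to extract a convergent subsequence using the $C^{1,1}$ bounds from Theorem 1.1 (equivalently Proposition 2.8), and then verify that the limit inherits the minimizing property. First I would fix an arbitrary radius $R>0$. Since $p$ is an interior free boundary point, for all $r$ small enough the rescaled functions $\Tprj$ solve the $N$-membranes problem (with constant forcing terms $f_j$) in $B_{2R}$, as noted in Remark 3.9. By the interior estimate applied on, say, $B_{2R}$ and rescaled back, one has $\|\Tprj\|_{C^{1,1}(B_R)}\le C(R)$ uniformly in $r$; here one uses crucially that under (NA) and the normalization $\Tprj(0)=0$, $\nabla\Tprj(0)=0$ (which follows from $p\in\cap_j\partial\{u_j>u_{j+1}\}$ together with $C^{1,\alpha}$-regularity and quadratic growth, Proposition 2.6), so the $C^{1,1}$ seminorm bound upgrades to a genuine $C^{1,1}(B_R)$ bound with no additive lower-order blow-up. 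By Arzelà–Ascoli, along a subsequence $r_k\to 0$, $\{T_{p,r_k}u_j\}$ converges in $C^{1,\alpha}(B_R)$ for every $\alpha'<1$; a diagonal argument over $R=1,2,3,\dots$ produces a single subsequence with $C^{1,\alpha}_{loc}(\mathbb{R}^d)$ convergence to some $\{T_{p,0}u_j\}$. The limit clearly still satisfies the ordering $T_{p,0}u_1\ge\cdots\ge T_{p,0}u_N$ since this is preserved under locally uniform convergence.

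Next I would show $\{T_{p,0}u_j\}$ is a local minimizer of $\mathcal{F}$, meaning it minimizes $\mathcal{F}$ over every ball $B_R$ against competitors agreeing with it on $\partial B_R$. This is the standard lower-semicontinuity-plus-competitor-surgery argument for limits of minimizers. Given a competitor $\{v_j\}\in\mathcal{K}$ on $B_R$ with $v_j=T_{p,0}u_j$ on $\partial B_R$, I would, for each $k$, build an admissible competitor $\{v_j^k\}$ for $\{T_{p,r_k}u_j\}$ on $B_R$: interpolate between $v_j$ and $T_{p,r_k}u_j$ on a thin shell $B_R\setminus B_{R-\delta}$ using a cutoff, say $v_j^k=\eta v_j+(1-\eta)T_{p,r_k}u_j$ where $\eta$ is $1$ on $B_{R-\delta}$ and $0$ near $\partial B_R$. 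The ordering $v_1^k\ge\cdots\ge v_N^k$ is preserved because convex combinations of two ordered tuples are ordered. Using $C^{1,\alpha}_{loc}$ convergence $T_{p,r_k}u_j\to T_{p,0}u_j$, the energy contribution on the shell from $v_j^k$ converges (as $k\to\infty$ then $\delta\to0$) to the shell contribution of $v_j$, which tends to $0$; meanwhile $\mathcal{F}_{B_R}(\{T_{p,r_k}u_j\})\le \mathcal{F}_{B_R}(\{v_j^k\})$ by minimality of $T_{p,r_k}u_j$. Passing to the limit, using weak lower semicontinuity of the Dirichlet part and locally uniform (hence $L^2$) convergence for the $\int f_j u_j$ part,
\begin{equation*}
\mathcal{F}_{B_R}(\{T_{p,0}u_j\})\le\liminf_k\mathcal{F}_{B_R}(\{T_{p,r_k}u_j\})\le\liminf_k\mathcal{F}_{B_R}(\{v_j^k\})=\mathcal{F}_{B_R}(\{v_j\}),
\end{equation*}
which is the claimed minimality.

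The main obstacle I anticipate is the competitor construction in the minimality step: one must ensure the interpolated tuples remain in the ordered class $\mathcal{K}$ and that the boundary values are matched exactly, while controlling the shell energy uniformly as $k\to\infty$. The convexity of the constraint set makes the ordering automatic for the linear interpolation, so the real work is the quantitative shell estimate — bounding $\int_{B_R\setminus B_{R-\delta}}\sum|\nabla v_j^k|^2$ by something that vanishes as $\delta\to0$ uniformly in large $k$, which requires the uniform $C^{1,1}$ (hence $W^{1,\infty}$) bound on $T_{p,r_k}u_j$ from the first paragraph together with the convergence $v_j=T_{p,0}u_j$ on $\partial B_R$ to control $\|v_j-T_{p,r_k}u_j\|$ on the shell via a trace/Poincaré-type inequality with $\delta$-dependent constant. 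Everything else — extraction of the subsequence, preservation of the ordering, lower semicontinuity — is routine.
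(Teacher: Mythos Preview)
Your proposal is correct and follows essentially the same approach as the paper: uniform $C^{1,1}$ bounds (using $\Tprj(0)=0$, $\nabla\Tprj(0)=0$ under (NA)) give compactness, a diagonal argument yields $C^{1,\alpha}_{loc}$ convergence, and minimality of the limit is obtained by a cutoff-interpolation competitor. The only noteworthy technical difference is where the interpolation takes place. You interpolate on an \emph{inner} shell $B_R\setminus B_{R-\delta}$ between the arbitrary $H^1$ competitor $v_j$ and $T_{p,r_k}u_j$, which forces a double limit $k\to\infty$ then $\delta\to 0$ and the shell Poincar\'e estimate you correctly flag. The paper instead extends the competitor by $T_{p,0}u_j$ to $B_{2R}$ and interpolates on the \emph{outer} annulus $B_{2R}\setminus B_R$; there the cutoff acts only on $T_{p,0}u_j-T_{p,r_k}u_j$, which vanishes in $C^{1,\alpha}$, so the annulus error goes to zero immediately without any $\delta$-limit or Poincar\'e argument. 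The paper also phrases the minimality step as a contradiction, but that is cosmetic. One small remark: your appeal to weak lower semicontinuity for $\mathcal{F}_{B_R}(\{T_{p,0}u_j\})\le\liminf_k\mathcal{F}_{B_R}(\{T_{p,r_k}u_j\})$ is more than you need, since the $C^{1,\alpha}$ convergence already gives strong $H^1_{loc}$ convergence and hence equality of energies.
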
 

\begin{proof}
Denote $dist(p,\partial\Omega)>0$ by $D$, then in $B_{D/r}$, $\Tprj(x)=\frac{1}{r^2}u_j(p+rx)$, $\nabla\Tprj(x)=\frac{1}{r}\nabla u_j(p+rx)$ and $D^2\Tprj(x)=D^2u_j(p+rx)$. In particular, if we apply a scaled version of the $C^{1,1}$-estimate, we have $$|D^2\Tprj|\le C \text{ in $B_{D/(2r)}$}$$ for some $C$ depending on $d, N$ and $\sum\|u_j\|_{\mathcal{L}^\infty(\Omega)}+\sum\|f_j\|_{C^\alpha(\Omega)}.$

Meanwhile, with the contacting condition and the null average assumption in effect, we have $\Tprj(0)=0$, and $\nabla\Tprj(0)=0$. These combined with the bound on Hessian gives a uniform bound on $\Tprj$ in $C^{1,1}(B_R)$ for any fixed $R>0$, which gives, up to a subsequence, convergence of $\Tprj$ in $C^{1,\alpha}(B_R)$ for any fixed $R>0$. The convergence in $C_{loc}^{1,\alpha}(\mathbb{R}^d)$ follows by a diagonal argument. 

Now for a positive $R>0$, we show that the limit $\{T_{p,0}u_j\}$, as $r_k\to 0$,  is a minimiser for $\mathcal{F}$ in $B_R$. 

Suppose not, we find $\{w_j\}$ such that $w_j\ge w_{j+1}$ and $w_j=T_{p,0}u_j$ along $\partial B_R$ but $$\int_{B_R}\sum(|\nabla w_j|^2+2f_jw_j)<\int_{B_R}\sum(|\nabla T_{p,0}u_j|^2+2f_jT_{p,0}u_j)-\delta$$ for some $\delta>0$. By uniform $C^{1,\alpha}$ convergence in $B_{R}$, this gives $$\int_{B_{R}}\sum(|\nabla w_j|^2+2f_jw_j)<\int_{B_{R}}\sum(|\nabla T_{p,r_k}u_j|^2+2f_jT_{p,r_k}u_j)-\frac{1}{2}\delta$$ for all $k>K(\delta)$.

Define $v_j$ for $j=1,2,\dots, N$ by $$v_j(x)=\begin{cases} w_j(x) &\text{ for $|x|\le R$,}\\ T_{p,0}u_j(x) &\text{ for $R<|x|\le 2R$.}\end{cases}$$ Take a cut-off function $\eta$ that is supported in $B_{2R}$ and equals $1$ in $B_R$, then $\tilde v_j:=\eta v_j+(1-\eta)T_{p,r_k}u_j$ agrees with $T_{p,r_k}u_j$ on $\partial B_{2R}$, and satisfies $\tilde{v}_{j}\ge \tilde{v}_{j+1}$. Therefore, minimality of $\{T_{p,r_k}u_j\}_{j=1,2,\dots, N}$ gives 
\begin{align*}&\int_{B_{2R}}\sum(|\nabla T_{p,r_k}u_j|^2+2f_jT_{p,r_k}u_j)\le \int_{B_{2R}}\sum(|\nabla \tilde{v}_j|^2+2f_j\tilde{v}_j)\\&=\int_{B_R}\sum(|\nabla w_j|^2+2f_jw_j)+\int_{B_{2R}\backslash B_R}\sum(|\nabla \tilde{v}_j|^2+2f_j\tilde{v}_j)\\&<\int_{B_{R}}\sum(|\nabla T_{p,r_k}u_j|^2+2f_jT_{p,r_k}u_j)-\frac{1}{2}\delta+\int_{B_{2R}\backslash B_R}\sum(|\nabla T_{p,r_k}u_j|^2+2f_jT_{p,r_k}u_j)\\&+\int_{B_{2R}\backslash B_R}\sum(|\nabla [\eta (T_{p,r_k}u_j-T_{p,0}u_j)|^2+2f_j[\eta(T_{p,r_k}u_j-T_{p,0}u_j)]),
\end{align*}that is, $$\int_{B_{2R}\backslash B_R}\sum(|\nabla [\eta (T_{p,r_k}u_j-T_{p,0}u_j)|^2+2f_j[\eta(T_{p,r_k}u_j-T_{p,0}u_j)])>\frac{1}{2}\delta$$ for all $k>K(\delta).$

Note that the left-hand side converges to $0$ as $k\to\infty$ by uniform $C^{1,\alpha}$ convergence in $B_{2R}$, leading to a contradiction. \end{proof}

With all these preparations, we can finally give the proof of Theorem 1.3:
\begin{proof}
Fix some $R>0$, then uniform $C^{1,\alpha}$ convergence and the symmetry property of the  Weiss-type energy give\begin{align*}
W(\{T_{p,0}u_j\}, 0, R)&=\lim W(\{T_{p,r}u_j\}, 0, R)\\&=\lim W(\{u_j\}, 0, rR)\\&=W(\{u_j\}, 0, 0+).
\end{align*}The last limit exists by monotonicity. 

In particular, the value of $W(\{T_{p,0}u_j\}, 0, R)$ is independent of $R$. Now being a local minimiser, Corollary 3.4 applies to $\{T_{p,0}u_j\}$ and gives the homogeneity. 
\end{proof} 

\begin{rem}
The fact that $f_j$'s are constant gives scaling symmetry to the  Weiss-type energy, which makes the proof of Proposition 3.1 particularly straightforward. However, even for variable forcing terms, a statement like the one in Theorem 1.3 would still be true, as long as we have some regularity of the $f_j$'s. This is because even if $f_j$'s are not constant, the blow-up solutions would only see the value of $f_j$'s at the origin. As a result, the argument for blow up homogeneity still applies. 
\end{rem} 

Finally we observe the  non-degeneracy of blow-up solutions. This in particular says that $0$ is a free boundary point of the highest multiplicity for blow-up profiles. 

\begin{prop}
For $p\in\cap_{j=1,2,\dots,N-1}\partial\{u_j>u_{j+1}\}$ and under (ND), $$\sup_{B_1}(T_{p,0}u_j-T_{p,0}u_{j+1})\ge C \text{ for all $j$,}$$ where $C$ is some constants depending on $d, N$ and $\theta$.  
\end{prop}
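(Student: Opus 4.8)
The plan is to prove the non-degeneracy by a contradiction–compactness argument that exploits the quadratic growth \emph{from below}. First I would recall that Proposition 2.6 gives a uniform \emph{upper} bound $\sup_{B_r(p)}(u_j-u_{j+1})\le Cr^2$, so that the blow-up $T_{p,0}u_j-T_{p,0}u_{j+1}$ is a well-defined function, homogeneous of degree $2$ by Theorem 1.3, and hence it suffices to produce a \emph{lower} bound on $\sup_{B_r(p)}(u_j-u_{j+1})$ of the form $cr^2$ with $c>0$ universal (depending on $d,N,\theta$); passing to the blow-up then yields the claim with $C=c$. So I would reduce to the following statement: for $p\in\cap_k\partial\{u_k>u_{k+1}\}$, for each $j$ and all small $r$, $\sup_{B_r(p)}(u_j-u_{j+1})\ge c r^2$.

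To get this lower bound I would argue as in the proof of Proposition 2.6 but now using the (ND) hypothesis. Fix $j$ and a point $p$ on the full-multiplicity free boundary; since $p\in\partial\{u_j>u_{j+1}\}$, in every ball $B_r(p)$ there is a subset where $u_j>u_{j+1}$, and on that open set Proposition 2.3 (together with Remark 2.4) controls $\Delta(u_j-u_{j+1})$: in the region where $u_{j-1}>u_j>u_{j+1}>u_{j+2}$ one has $\Delta(u_j-u_{j+1})=f_j-f_{j+1}\ge\theta$, and more generally, summing the Euler–Lagrange relations over any contacting block, the quantity $u_j-u_{j+1}$ satisfies a differential \emph{inequality} $\Delta(u_j-u_{j+1})\ge \theta'$ in $\{u_j>u_{j+1}\}$ for some $\theta'=\theta'(\theta,N)>0$ (this is the key place (ND) enters; one checks that in each phase the relevant Laplacian is an average of differences $f_k-f_{k+1}$ all bounded below by $\theta$, so the combination is bounded below). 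Then comparison with the function $w(x)=\frac{\theta'}{2d}\big(|x-p|^2-\mathrm{something}\big)$, or more simply rescaling $h^r(x)=\tfrac{1}{S_r}(u_j-u_{j+1})(p+rx)$ with $S_r=\sup_{B_r(p)}(u_j-u_{j+1})$ and running the same dichotomy/blow-up as in Proposition 2.6, forces a contradiction: in the limit $h$ is a nonnegative \emph{subharmonic} function (if $S_r/r^2\to\infty$ the limit is subharmonic, nonnegative, vanishing at $0$ with $\sup_{B_1}h=1$) — but that is not yet a contradiction for subharmonic functions, so I would instead keep the forcing term: with $S_r=o(r^2)$ the rescaled functions satisfy $\Delta h^r\ge \theta' r^2/S_r\to+\infty$ on the set $\{h^r>0\}$ intersected with a fixed ball, which is impossible for functions with $\sup_{B_1}h^r=1$, $0\le h^r\le 4$, because a uniform-in-$r$ lower barrier of opening $\theta' r^2/S_r$ centered at an interior point of $\{u_j>u_{j+1}\}\cap B_{r}$ would eventually exceed the $L^\infty$ bound. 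Quantifying this barrier argument gives the clean lower bound $\sup_{B_r(p)}(u_j-u_{j+1})\ge \tfrac{\theta'}{2d\cdot 4^{?}}r^2\ge cr^2$.

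Finally I would transfer to the blow-up: since $T_{p,r}u_j\to T_{p,0}u_j$ in $C^{1,\alpha}_{loc}$ along the chosen subsequence, $\sup_{B_1}(T_{p,r}u_j-T_{p,r}u_{j+1})=\tfrac{1}{r^2}\sup_{B_r(p)}(u_j-u_{j+1})\ge c$ for all small $r$, and the supremum is continuous under uniform convergence, so $\sup_{B_1}(T_{p,0}u_j-T_{p,0}u_{j+1})\ge c$, which is exactly the assertion. The main obstacle I anticipate is the bookkeeping in the second paragraph: verifying that, in \emph{every} possible contacting configuration of the membranes near $p$ (not just the generic one), the PDE for $u_j-u_{j+1}$ on the set $\{u_j>u_{j+1}\}$ carries a genuinely positive lower bound on its Laplacian coming only from (ND) — this requires carefully summing the Euler–Lagrange identities of Proposition 2.3 over the correct blocks and checking the sign of the resulting constant. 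Once that positivity is secured, the barrier/compactness step is routine and essentially identical in structure to the proof of Proposition 2.6.
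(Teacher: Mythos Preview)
Your overall strategy coincides with the paper's: reduce to the pre--blow-up non-degeneracy estimate $\sup_{B_r(p)}(u_j-u_{j+1})\ge c\,r^2$, then pass to the limit along the subsequence using uniform convergence. The paper simply quotes this lower bound from Lindgren--Razani (their Proposition~2) rather than re-deriving it, so your middle paragraph is supplying a proof of the cited fact.

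Two remarks on that middle paragraph. First, the ``bookkeeping'' you flag as the main obstacle is in fact clean: on $\{u_j>u_{j+1}\}$, Remark~2.4 gives $\Delta u_j$ as the average of $f_k,\dots,f_j$ over whatever block contains $u_j$, and $\Delta u_{j+1}$ as the average of $f_{j+1},\dots,f_{j+m}$ over the block containing $u_{j+1}$; monotonicity of the $f_i$'s then yields $\Delta(u_j-u_{j+1})\ge f_j-f_{j+1}\ge\theta$ directly, with no $N$-dependent loss. Second, once you have this, the straight Caffarelli barrier --- pick $q\in\{u_j>u_{j+1}\}$ near $p$, apply the maximum principle to $(u_j-u_{j+1})(x)-\tfrac{\theta}{2d}|x-q|^2$ on the component of $\{u_j>u_{j+1}\}\cap B_r(q)$, and let $q\to p$ --- gives $\sup_{B_r(p)}(u_j-u_{j+1})\ge \tfrac{\theta}{2d}r^2$ in one line; the compactness detour you sketch afterwards is unnecessary (and, as you noticed, awkward because the positivity set of $h^r$ is not controlled).
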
 

\begin{proof}
Simply note that \begin{align*}\sup_{B_1}(T_{p,0}u_j-T_{p,0}u_{j+1})&=\lim(\sup_{B_1}(T_{p,r}u_j-T_{p,r}u_{j+1}))\\&=\lim\frac{1}{r^2}\sup_{B_r(p)}(u_j-u_{j+1}).\end{align*}

Now as in the classical obstacle problem, the non-degeneracy condition $f_j-f_{j+1}\ge\theta>0$ forces $$\sup_{B_r(p)}(u_j-u_{j+1})\ge C(n,\theta)r^2.$$ For a detailed proof of this fact, the reader should consult Proposition 2 in Lindgren-Razani \cite{LR}.\end{proof}

\section{Homogeneous solutions in the plane}
The next step in the study of $\cap_{j=1,2,\dots, N-1}\partial\{u_{j}>u_{j+1}\}$ would be to classify all possible blow-up profiles. We take this task in the plane $\mathbb{R}^2$ under assumptions (NA), (CF), (ND) and that $0\in\cap_{j=1,2,\dots, N-1}\partial\{u_{j}>u_{j+1}\}$.  

In this section, we use $\{u_j\}_{j=1,2,\dots, N}$, instead of $\{T_{0,r}u_j\}_{j=1,2,\dots, N}$, to denote the blow-up profile. We further make the following distinction between two types of solutions depending on the contact sets on the circle:

\begin{defi}
For $k=1,2,\dots, N-1$, $u_k$ and $u_{k+1}$ are \textit{disconnected} if $\{u_k=u_{k+1}\}\cap\mathbb{S}^1$ consists of isolated points. Otherwise, we say they are \textit{connected}.\end{defi} 

Whenever $u_k$ and $u_{k+1}$ are disconnected, there is no interaction between the two membranes in the sense that their Laplacians experience no jumps due to their contact. In this case, the two groups of membranes, $\{u_j\}_{j\le k}$ and $\{u_j\}_{j\ge k+1}$, solve two multiple-membranes problems separately. As a result, to classify all possible blow-up profiles, it suffices to consider the case when all consecutive membranes are connected and prove Theorem 1.4. 

To do so, we introduce $(\rho,\omega)$ as the variables in polar coordinates.  Also, for some interval on the circle $(\alpha,\beta)$, $\Gamma_{(\alpha,\beta)}$ denotes the cone generated by the interval, namely, $$\Gamma_{(\alpha,\beta)}=\{(\rho,\omega)|0<\rho<1,\alpha<\omega<\beta\}.  $$

The core of the argument is the following elementary observation:

\begin{lem}
Suppose $g$ is a non-negative, non-decreasing step function on $[0,\alpha]$ for some $0<\alpha\le\pi$, and that $v$ is a homogeneous of degree 2 function in $\Gamma_{[0,\alpha]}$ satisfying $$\Delta v(\rho,\omega)=g(\omega)$$ in the interior of the cone. 

If $v(\rho,0)=0$ and $\nabla v(\rho,0)=0$, then $v$ is convex in $\Gamma_{[0,\alpha]}$.
\end{lem}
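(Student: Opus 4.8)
The plan is to write $v$ in polar coordinates as $v(\rho,\omega)=\rho^2\phi(\omega)$, convert the PDE into an ODE for $\phi$, and then read off convexity by examining the Hessian of $v$ directly in terms of $\phi$, $\phi'$ and $\phi''$. First I would record that for a degree-two homogeneous function $v=\rho^2\phi(\omega)$ one has $\Delta v = \rho^0(\phi''+4\phi)$, so the equation $\Delta v = g(\omega)$ becomes the pointwise ODE $\phi''(\omega)+4\phi(\omega)=g(\omega)$ on each open subinterval where $g$ is constant, with $\phi\in C^1$ across the finitely many jump points of $g$ (the jump in $g$ produces only a jump in $\phi''$). The boundary data $v(\rho,0)=0$, $\nabla v(\rho,0)=0$ translate into $\phi(0)=0$ and $\phi'(0)=0$.

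Next I would relate convexity of $v$ to properties of $\phi$. Writing the Hessian of $v=\rho^2\phi(\omega)$ in the orthonormal frame $(\partial_\rho,\rho^{-1}\partial_\omega)$, a direct computation gives
\begin{equation*}
D^2 v \;=\; \begin{pmatrix} 2\phi & \phi' \\ \phi' & 2\phi+\phi'' \end{pmatrix},
\end{equation*}
(the entries being the components of the Hessian in that frame, valid away from the origin by homogeneity, and at the origin $D^2v=0$ since $v$ vanishes to second order on the ray $\omega=0$ — here one uses $\phi(0)=\phi'(0)=0$ together with the bound on $v$). So $v$ is convex in $\Gamma_{[0,\alpha]}$ if and only if this $2\times 2$ matrix is positive semidefinite throughout $(0,\alpha)$, i.e. iff $\phi\ge 0$ and $2\phi(2\phi+\phi'')\ge (\phi')^2$, equivalently $4\phi^2+2\phi\phi''-(\phi')^2\ge 0$. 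Since $\phi''=g-4\phi$, the second condition becomes $2\phi g - 2\phi\cdot 4\phi + 4\phi^2 - (\phi')^2 = 2\phi g - 4\phi^2 - (\phi')^2 \ge 0$; hmm, this does not obviously hold, so the argument must instead proceed by first proving the needed sign information on $\phi$ and $\phi'$ from the ODE and only then assembling convexity.

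Therefore the heart of the argument is an ODE comparison: I would show that $\phi\ge 0$ and $\phi'\ge 0$ on $[0,\alpha]$, and more precisely enough monotonicity to control $(\phi')^2$. The mechanism: on the first subinterval $g\equiv g_1\ge 0$ is a constant, $\phi$ solves $\phi''+4\phi=g_1$ with $\phi(0)=\phi'(0)=0$, so $\phi(\omega)=\tfrac{g_1}{4}(1-\cos 2\omega)\ge 0$ and $\phi'(\omega)=\tfrac{g_1}{2}\sin 2\omega\ge 0$ for $\omega\in[0,\pi/2]$; across a jump of $g$ upward, $\phi$ and $\phi'$ are continuous and $\phi''$ jumps up, which only helps. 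The key structural point to exploit is that $g$ is \emph{non-decreasing} and non-negative, which should force $\phi\ge 0$ and $\phi'\ge 0$ to persist on all of $[0,\alpha]$ with $\alpha\le\pi$; I would prove this by induction over the finitely many intervals of constancy, using the explicit solution $\phi = \phi_{\mathrm{part}} + (\text{solution of homogeneous equation matching }C^1\text{ data})$ on each piece and checking the signs propagate, the constraint $\alpha\le\pi$ being exactly what keeps the trigonometric factors of the relevant half-period from changing sign. With $\phi\ge 0$ and the monotonicity/sign control in hand, positive semidefiniteness of the Hessian matrix above follows, giving convexity.

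The main obstacle I anticipate is the bookkeeping in this induction: showing that the "good" signs ($\phi\ge 0$, $\phi'\ge 0$, and the determinant inequality) are preserved when passing an upward jump of $g$ and concatenating the trigonometric pieces, and verifying that the hypothesis $\alpha\le\pi$ is precisely what prevents $\sin$ or $\cos$ factors from spoiling a sign. A secondary technical point is justifying $D^2v=0$ at the origin and the $C^1$-matching of $\phi$ at jump points of $g$ — both routine given the stated regularity of $v$ and the step-function structure of $g$, but they need to be stated cleanly so the convexity conclusion holds on the closed cone $\Gamma_{[0,\alpha]}$ including its vertex.
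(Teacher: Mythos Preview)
Your approach is genuinely different from the paper's, and as it stands it has a real gap. The paper's proof is a two-line observation you are missing: since $g$ is a step function with jumps $\sigma_j-\sigma_{j-1}\ge 0$ at the angles $\alpha_j$, and since the initial data on the ray $\omega=0$ vanish, one can write $v$ \emph{explicitly} as
\[
v(x)=\frac{\sigma_0}{2}(x\cdot e_0)_+^2+\sum_{j\ge 1}\frac{\sigma_j-\sigma_{j-1}}{2}(x\cdot e_j)_+^2,
\]
where $e_j$ is the unit vector at angle $\alpha_j+\pi/2$. The hypothesis $\alpha\le\pi$ is used only to guarantee that each half-space $\{x\cdot e_j>0\}$ intersects the cone exactly in $\Gamma_{(\alpha_j,\alpha]}$, so the formula really does solve the equation with the right Cauchy data. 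Convexity is then immediate: a nonnegative linear combination of the convex functions $(x\cdot e_j)_+^2$.

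Your ODE route could in principle be pushed through, but the step you flag as ``the main obstacle'' is not actually carried out, and it is the whole content of the lemma. You correctly compute that positive semidefiniteness of $D^2v$ amounts to $\phi\ge 0$ together with $2\phi g-4\phi^2-(\phi')^2\ge 0$, and then you propose to deduce this from ``$\phi\ge 0$ and $\phi'\ge 0$ on $[0,\alpha]$''. But those two sign conditions do \emph{not} imply the determinant inequality; indeed on the first interval the determinant is identically zero, so there is no slack, and after a jump you must track a genuinely two-parameter trigonometric family and show the inequality persists---you never do this, only assert that ``the signs propagate''. The cleanest way to close the gap is to abandon the scalar bookkeeping on $\phi,\phi'$ and instead note that on each sector $v$ is a \emph{quadratic polynomial} (degree-two homogeneous with constant Laplacian), so $D^2v$ is a constant matrix there; at the interface $\omega=\alpha_j$ the Hessian jumps by the rank-one matrix $(\sigma_j-\sigma_{j-1})\,e_j\otimes e_j\ge 0$, and on the first sector $D^2v=\sigma_0\,e_0\otimes e_0\ge 0$. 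That inductive matrix argument is exactly the paper's formula in disguise.
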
 

\begin{proof}
For some $0=\alpha_0<\alpha_1<\dots<\alpha_{s-1}<\alpha_s=\alpha$, we have $g=\sigma_j$ on $(\alpha_j-1,\alpha_j]$ and $\sigma_j\le\sigma_{j+1}.$

Then the initial conditions along the ray $\omega=0$ and $\alpha\le\pi$ imply that $v$ is given explicitly by $$v(x)=\frac{\sigma_0}{2}(x\cdot e_0)_+^2+\sum\frac{\sigma_j-\sigma_{j-1}}{2}(x\cdot e_{j})_+^2,$$where $e_j$ denotes the unit vector $\omega=\alpha_j+\pi/2.$

Being a combination of convex functions with non-negative coefficients, $v$ is convex.\end{proof}

\begin{cor}
Let $g:[0,\alpha]\to\mathbb{R}$ be a non-negative step function that is non-decreasing on $[0,\gamma]$ and non-increasing on $[\gamma,\alpha]$ for some $0\le\gamma\le\alpha\le 2\pi.$ 

If there is a nontrivial homogeneous of degree 2 function $v$ in $\Gamma_{[0,\alpha]}$, solving $$\Delta v=g$$ in the interior of the cone, and satisfying $v(\rho,0)=v(\rho,\alpha)=0$ and $\nabla v(\rho,0)=\nabla v(\rho,\alpha)=0$, then $$\alpha\ge\pi.$$

Furthermore, if $\alpha=\pi$, then $g$ must be constant. 
\end{cor}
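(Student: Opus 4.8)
The plan is to combine the convexity statement of Lemma 4.3 with a second-derivative/monotonicity-along-the-angle argument. First I would split the step function $g$ at its maximum angle $\gamma$: on $[0,\gamma]$ the function $g$ is non-negative and non-decreasing, so Lemma 4.3 (applied to the cone $\Gamma_{[0,\gamma]}$, using the boundary conditions $v(\rho,0)=0$, $\nabla v(\rho,0)=0$, provided $\gamma\le\pi$) would give that $v$ is convex there, with the explicit representation $v(x)=\sum_j c_j(x\cdot e_j)_+^2$ with $c_j\ge 0$. Symmetrically, reflecting the angular variable $\omega\mapsto\alpha-\omega$ turns $g$ restricted to $[\gamma,\alpha]$ into a non-negative non-decreasing step function starting from the ray $\omega=\alpha$ where $v$ and $\nabla v$ vanish, so (provided $\alpha-\gamma\le\pi$) Lemma 4.3 again applies and $v$ is convex on $\Gamma_{[\gamma,\alpha]}$ with an analogous representation in terms of the rays $\omega=\alpha_j-\pi/2$.

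The heart of the argument is then to show $\alpha\ge\pi$. Suppose for contradiction $\alpha<\pi$; in particular $\gamma<\pi$ and $\alpha-\gamma<\pi$, so both halves of the above apply and $v$ is convex on each of $\Gamma_{[0,\gamma]}$ and $\Gamma_{[\gamma,\alpha]}$. I would look at the behavior of $v$ near the apex along the two boundary rays: since $v$ is homogeneous of degree $2$ and vanishes to second order (value and gradient zero) on the ray $\omega=0$, writing $v=\rho^2\phi(\omega)$ one gets $\phi(0)=\phi'(0)=0$, and likewise $\phi(\alpha)=\phi'(\alpha)=0$; the PDE becomes the ODE $\phi''+4\phi=g(\omega)$ on $(0,\alpha)$, with $g\ge 0$. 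On the first interval, $\phi$ solves $\phi''+4\phi=g\ge 0$ with $\phi(0)=\phi'(0)=0$, which by the variation-of-constants formula gives $\phi(\omega)=\frac12\int_0^\omega g(t)\sin(2(\omega-t))\,dt\ge 0$ as long as $\omega\le\pi/2$ (so the sine kernel is non-negative), and more importantly $\phi>0$ for $\omega\in(0,\pi/2]$ once $g$ is not identically zero. The key obstruction: if $\alpha<\pi$ then the two boundary conditions at $\omega=0$ and $\omega=\alpha$ over-determine the solution. Concretely, the representation from Lemma 4.3 on $[0,\gamma]$ forces $v$ (hence $\phi$) to be a sum $\sum c_j\sin^2(\omega-\alpha_j)$-type terms supported where $\omega>\alpha_j$, each non-negative on $[\alpha_j,\alpha_j+\pi]$; the matching representation from the other side forces $v$ to vanish to second order at $\omega=\alpha$. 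Tracking the total `angular mass' — integrating the ODE against $\sin(2\omega)$ over $(0,\alpha)$ and using the boundary data — yields $\int_0^\alpha g(\omega)\sin(2\omega)\,d\omega$ must both be positive (since $g\ge0$, $g\not\equiv0$, and $\sin 2\omega>0$ on $(0,\pi/2)$) and forced to vanish or change sign unless $\alpha\ge\pi$, a contradiction. I expect the cleanest route is actually: integrate $\phi''+4\phi=g$ against the solution $\psi(\omega)=\sin(2\omega)$ of the homogeneous equation on $(0,\alpha)$, integrate by parts twice using $\phi(0)=\phi'(0)=\phi(\alpha)=\phi'(\alpha)=0$, to obtain $\int_0^\alpha g(\omega)\sin(2\omega)\,d\omega = [\phi'\psi-\phi\psi']_0^\alpha = 0$ when $\alpha=\pi$ (since $\sin 0=\sin 2\pi=0$), but for $\alpha<\pi$ one gets $\phi'(\alpha)\sin(2\alpha)=0$ forcing $\phi'(\alpha)=0$ — already assumed — so one must push to the next comparison function or use the convexity more quantitatively.

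For the sharper version, I would use the comparison function $\psi(\omega)=\sin(2\omega)$ on $(0,\pi/2]$: from $\phi''+4\phi=g\ge 0$, $\phi(0)=\phi'(0)=0$ we get $\phi\ge 0$ and $\phi'\ge 0$ on a maximal interval, and since $g\not\equiv 0$, $\phi$ becomes strictly positive; by the reflected argument $\phi$ is also forced to be $\ge 0$ coming from $\omega=\alpha$ backward. If $\alpha<\pi$, the convex representations on the two sides pin down $\phi$ on all of $[0,\alpha]$, and evaluating the explicit formula at $\omega=\alpha$ (where all the $\sin^2(\alpha-\alpha_j)$ terms with $c_j>0$ are strictly positive, because $\alpha-\alpha_j<\pi$ for every $j$, including $\alpha_0=0$ giving $\sin^2\alpha>0$) yields $\phi(\alpha)>0$, contradicting $\phi(\alpha)=0$ unless all $c_j=0$, i.e.\ $v\equiv 0$. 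This proves $\alpha\ge\pi$. Finally, when $\alpha=\pi$: repeat the explicit computation. With $\alpha=\pi$, the boundary conditions at $0$ give $v(x)=\frac{\sigma_0}{2}(x\cdot e_0)_+^2+\sum_j\frac{\sigma_j-\sigma_{j-1}}{2}(x\cdot e_j)_+^2$ with $e_j$ the direction $\omega=\alpha_j+\pi/2$. Imposing in addition $v(\rho,\pi)=0$ and $\nabla v(\rho,\pi)=0$ forces, termwise, each $(x\cdot e_j)_+^2$ that is nonzero at $\omega=\pi$ to contribute — and $(x\cdot e_j)_+$ vanishes at $\omega=\pi$ only if $e_j$ is orthogonal to that ray, i.e.\ $\alpha_j=0$; so every jump $\sigma_j-\sigma_{j-1}$ with $\alpha_j\in(0,\pi)$ must be zero. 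Hence $g$ has no jumps in $(0,\pi)$, i.e.\ $g$ is constant on $(0,\pi)$.

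The main obstacle I anticipate is making the dichotomy at $\gamma$ fully rigorous when $\gamma$ or $\alpha-\gamma$ could a priori exceed $\pi$: Lemma 4.3 is only stated for cones of opening $\le\pi$, so I would first need to argue that the hypotheses ($v$ bounded/homogeneous degree $2$, $v$ and $\nabla v$ vanishing on the bounding ray) already preclude opening $>\pi$ for a monotone $g$ — essentially the $s=1$ case of the present corollary — or otherwise run the contradiction argument directly on the ODE $\phi''+4\phi=g$ without invoking the convexity representation, keeping careful track of the sign of $\phi$ and $\phi'$ across the breakpoints $\alpha_j$. Handling the non-increasing part via the reflection $\omega\mapsto\alpha-\omega$ is routine but must be stated cleanly so that the two one-sided representations are genuinely compatible on the overlap ray $\omega=\gamma$.
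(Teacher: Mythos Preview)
Your proposal takes a different route from the paper, and the main line you settle on contains a genuine gap. The explicit representation coming from the preceding lemma,
\[
v(x)=\tfrac{\sigma_0}{2}(x\cdot e_0)_+^2+\sum_{j\ge 1}\tfrac{\sigma_j-\sigma_{j-1}}{2}(x\cdot e_j)_+^2,
\]
is only valid on the sub-cone $\Gamma_{[0,\gamma]}$ where $g$ is non-decreasing; past $\omega=\gamma$ the right-hand side $g$ changes and $v$ is no longer given by that sum. So evaluating this formula at $\omega=\alpha$ to conclude $\phi(\alpha)>0$ is illegitimate --- on $(\gamma,\alpha)$ you are solving the wrong equation. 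The same objection defeats your $\alpha=\pi$ argument: you cannot read off the jumps of $g$ on all of $(0,\pi)$ from a representation that holds only on $[0,\gamma]$. Your earlier integration against $\sin(2\omega)$ is fine but, as you already observe, inconclusive; and the two one-sided representations are designed to match at $\omega=\gamma$, so there is no contradiction to be squeezed from compatibility alone.

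The paper's argument avoids the explicit formula once convexity on the two pieces $\Gamma_{[0,\gamma]}$ and $\Gamma_{[\gamma,\alpha]}$ is in hand. Assuming $\alpha\le\pi$, it looks at $\partial_{x_1}v$: this function is homogeneous of degree~$1$, so along the ray $\omega=\gamma$ it is either identically zero or of a fixed sign. If $\partial_{x_1}v>0$ along $\omega=\gamma$, convexity on $\Gamma_{[0,\gamma]}$ together with $\nabla v=0$ on $\omega=0$ yields a contradiction; the other sign is excluded symmetrically using the piece $\Gamma_{[\gamma,\alpha]}$. Hence $\partial_{x_1}v\equiv 0$ on $\omega=\gamma$, and then convexity on each piece forces $\partial_{x_1}v\equiv 0$ throughout $\Gamma_{[0,\alpha]}$. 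Thus $v=v(x_2)$, and $g=\partial_{x_2}^2 v$ is automatically constant --- this already gives the ``furthermore'' clause. Finally, with $g$ constant, $v$ is convex on all of $\Gamma_{[0,\alpha]}$; if $\alpha<\pi$ this cone is strictly convex, and a convex function whose value and gradient vanish on both boundary rays must vanish identically, contradicting nontriviality. The key idea you are missing is this use of convexity plus degree-$1$ homogeneity of a first partial to force $v$ to be one-dimensional, rather than trying to propagate a one-sided explicit formula across $\gamma$.
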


\begin{proof}
Suppose $\alpha\le\pi$, an application of the previous lemma implies the convexity of $v$ in $\Gamma_{[0,\gamma]}$ and $\Gamma_{[\gamma,\alpha]}$.

\textit{Claim:  $v$ depends only on the $x_2$-variable and $g$ must be constant.} To see this, note that $\frac{\partial}{\partial x_1}v$ is homogeneous of degree 1. Therefore, if it is not zero along $\omega=\gamma$, it must have a sign along that ray.  

If $\frac{\partial}{\partial x_1}v>0$ along $\omega=\gamma$, then by convexity in $\Gamma_{[\gamma, 0]},$ $\frac{\partial}{\partial x_1}v>0$ along $\omega=0$, a contradiction. The case $\frac{\partial}{\partial x_1}v<0$ along $\omega=\gamma$ is similarly ruled out by convexity in $\Gamma_{[\gamma, 0]}.$ As a result, $\frac{\partial}{\partial x_1}v=0$ along $\omega=\gamma$. In this case, convexity implies that $\frac{\partial}{\partial x_1}v$ must vanish in $\Gamma_{[0,\alpha]}$. Therefore, $v$ depends only on $x_2$ in the cone, concluding the proof for Claim. 

This shows that $g=\frac{\partial^2}{\partial x_2^2}v$ must be constant if $\alpha\le\pi.$ 

 Now that $g$ is constant, $v$ is convex in the entire $\Gamma_{[0,\alpha]}$ by the previous lemma. If $\alpha<\pi$, then $v$ must be $0$, since both itself and its gradient vanish along the boundary of this strictly convex cone, contradicting our assumption that $v$ is nontrivial. \end{proof} 

A simple application of the maximum principle implies that each connected component of $\{u_j>u_{j+1}\}\cap\mathbb{S}^1$ has length bounded from below by a universal constant. In particular, there are finitely many of such components.  Our first goal is to show that for connected solutions,  $\{u_j>u_{j+1}\}\cap\mathbb{S}^1$ consists of exactly of one component of length exactly $\pi$. 

To apply the previous corollary, we first focus on open intervals $I\subset\mathbb{S}$ with the following property:

$$\text{P: For some $k$, $u_k>u_{k+1}$ on $I$, $u_k=u_{k+1}$ on $\partial I$, and $\Delta u_k$ is constant on $I$.  }$$

Note that on an interval with property $P$, any change of $\Delta(u_k-u_{k+1})$ is due to the change of $u_{k+1}$, that is, due to detaching/ contacting of $u_{k+1}$ with $u_{k+s}$ for $s\ge 2$.

\begin{prop}
If $I$ has Property P, then $|I|\ge\pi.$
\end{prop}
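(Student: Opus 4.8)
The plan is to apply Corollary 4.4 to the function $v = u_k - u_{k+1}$ restricted to the cone generated by $I$. On $I$ we have $u_k > u_{k+1}$, so $v$ is a nontrivial, nonnegative, homogeneous-of-degree-$2$ function on $\Gamma_I$; since $0\in\cap_{j}\partial\{u_j>u_{j+1}\}$ and the blow-up profile is $C^{1,1}$ with $u_j(0)=0$, $\nabla u_j(0)=0$ (established in Section 3), and $v$ vanishes on $\partial I$ (by hypothesis $u_k=u_{k+1}$ there), we get $v=\nabla v=0$ along both boundary rays of $\Gamma_I$. So the boundary conditions of Corollary 4.4 are met once we normalize $I=(0,\alpha)$ with $\alpha=|I|$.

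The content of the argument is therefore to verify that $\Delta v$, as a function of $\omega$ alone on $I$, is a step function that is first non-decreasing and then non-increasing, i.e.\ has the ``single-bump'' shape required by the corollary. By Property P, $\Delta u_k$ is constant on $I$, so every variation of $\Delta v = \Delta u_k - \Delta u_{k+1}$ comes from variation of $\Delta u_{k+1}$. Now by Remark 2.4, at each point of $I$, $\Delta u_{k+1}$ equals $\frac{1}{m+1}\sum_{i=k+1}^{k+1+m} f_i$ where $u_{k+1}=u_{k+2}=\dots=u_{k+1+m}$ is the maximal block of membranes touching $u_{k+1}$ at that point (and $m=0$ when $u_{k+1}$ is strictly separated from $u_{k+2}$, giving $\Delta u_{k+1}=f_{k+1}$). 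Since the blow-up profile is homogeneous of degree $2$, this block structure is constant along each ray, so $\Delta u_{k+1}$ is a step function of $\omega$ on $I$; as we move $\omega$ across $I$, the set of membranes stuck to $u_{k+1}$ can only shrink and then grow back (a connected-component-of-a-contact-set argument), and $-\Delta u_{k+1} = \Delta v - \Delta u_k$ correspondingly can only increase then decrease. I would spell this monotone-then-monotone claim out carefully: the key point is that within $I$ the membrane $u_{k+1}$ detaches from the membranes below it as we move away from $\partial I$ toward the interior and re-attaches symmetrically, because each $\{u_{k+1}=u_{k+s}\}\cap\mathbb{S}^1$ is a union of finitely many closed arcs and the boundary rays of $I$ lie in the free boundary of the highest multiplicity, so near $\partial I$ all lower membranes are in contact. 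Hence $g(\omega):=\Delta v(\rho,\omega)$ is a nonnegative step function, non-decreasing on some $[0,\gamma]$ and non-increasing on $[\gamma,\alpha]$.

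With this shape verified, Corollary 4.4 gives immediately $\alpha = |I| \ge \pi$, which is exactly the claim. I expect the main obstacle to be the rigorous justification of the ``single-bump'' monotonicity of $\Delta u_{k+1}$ along $I$: one must argue that the lower membranes stuck to $u_{k+1}$ can detach at most once and in a nested fashion as $\omega$ traverses $I$, using that each contact set $\{u_{k+1}=u_{k+s}\}\cap\mathbb{S}^1$ is a finite union of arcs, that the endpoints of $I$ carry the full stack, and homogeneity to pass from the circle to the cone. The positivity of $g$ (needed for the corollary) should follow from $v\ge 0$ together with $v$ vanishing on $\partial I$: if $\Delta v$ were negative somewhere, combined with the boundary data this would be incompatible with $v\ge0$ being nontrivial — alternatively it follows from $v$ itself being, on each piece, one of the explicit convex quadratics of Lemma 4.3, but I will just invoke the sign of the shared-force jumps from the Euler--Lagrange relations.
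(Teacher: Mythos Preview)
Your overall plan --- apply Corollary~4.3 to $v=u_k-u_{k+1}$ on the cone $\Gamma_I$ --- is exactly the paper's strategy, and the boundary conditions on $v$ are fine. The genuine gap is your justification of the ``single-bump'' shape of $g(\omega)=\Delta v$.

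You argue that the contact sets $\{u_{k+1}=u_{k+s}\}\cap I$ are connected because ``the boundary rays of $I$ lie in the free boundary of the highest multiplicity, so near $\partial I$ all lower membranes are in contact.'' This is not true in general: Property~P only tells you $u_k=u_{k+1}$ at the endpoints of $I$; it says nothing about $u_{k+1}=u_{k+2}$ there. (The origin carries the full stack, but the two endpoints of $I$ on $\mathbb{S}^1$ are not the origin, and by homogeneity the contact pattern along a ray away from $0$ is dictated by the point on $\mathbb{S}^1$, not by $0$.) Without that claim, nothing prevents $\{u_{k+1}=u_{k+2}\}\cap I$ from having two components, in which case $\Delta u_{k+1}$ oscillates and $g$ fails the hypothesis of Corollary~4.3. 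In fact, the alignment of all contact sets at $\partial I$ is essentially the \emph{conclusion} of Theorem~1.4, so invoking it here is circular.

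The paper supplies the missing idea: work with the \emph{shortest} interval $I$ having Property~P. Minimality is what forces each $\{u_{k+1}=u_{k+s}\}\cap I$ to be connected. Indeed, if $\{u_{k+1}=u_{k+2}\}\cap I$ were disconnected, a component of $\{u_{k+1}>u_{k+2}\}$ would be strictly contained in $I$; on that component $\Delta u_{k+1}$ is constant (since $u_k>u_{k+1}>u_{k+2}$ there and $\Delta u_k$ is constant on $I$), so it too has Property~P, contradicting minimality of $I$. One continues inductively for $s\ge 3$. Once the nested sets $\{u_{k+1}=u_{k+2}\}\supset\{u_{k+1}=u_{k+3}\}\supset\cdots$ restricted to $I$ are all connected, $\Delta u_{k+1}$ is monotone down then up on $I$, so $g$ has the required shape and Corollary~4.3 applies. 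Since the shortest $I$ has $|I|\ge\pi$, so does every $I$ with Property~P.
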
 

\begin{proof}
Let $I$ be the shortest among intervals with Property P. It suffices to show $|I|\ge\pi$. 

Suppose $I$ is  a component of $\{u_k>u_{k+1}\}\cap\mathbb{S}^1.$

\textit{Claim: $\{u_{k+1}=u_{k+s}\}\cap I$ is connected for each $s\ge 2$.} Firstly, if $\{u_{k+1}=u_{k+2}\}\cap I$ is not connected, then there would be a component of $\{u_{k+1}>u_{k+2}\}$ \textit{strictly} contained in $I$. On this component, $\Delta u_{k+1}$ is constant since $u_k>u_{k+1}>u_{k+2}$ on this interval. Thus this component of $\{u_{k+1}>u_{k+2}\}$ has Property P, contradicting the minimality of $I$. 

If $\{u_{k+1}=u_{k+3}\}\cap I=\{u_{k+1}=u_{k+2}\}\cap \{u_{k+2}=u_{k+3}\}\cap I$ is not connected, then there would be a component of $\{u_{k+2}>u_{k+3}\}$ \textit{strictly} contained in $\{u_{k+1}=u_{k+2}\}\cap I$. On this component, $\Delta u_{k+2}$ is constant since $u_{k+1}=u_{k+2}>u_{k+3}$ on this interval and $\Delta u_{k+1}$ is constant there. Thus this component of $\{u_{k+2}>u_{k+3}\}$ has Property P, again contradicting the minimality of $I$. 

From here it is obvious how to inductively repeat the argument and conclude Claim. 

Note that by assumption $f_1\ge f_2\ge \dots\ge f_N$ and Remark 2.4, the more membranes are in contact with $u_{k+1}$, the smaller becomes $\Delta u_{k+1}.$ Since on $I$, the sets $\{u_{k+1}=u_{k+2}\}\supset\{u_{k+1}=u_{k+3}\}\supset\{u_{k+1}=u_{k+4}\}\supset\dots$ are all connected, $\Delta u_{k+1}$ would be non-increasing on a subinterval of $I$ and then non-decreasing in the rest of $I$. 

Take $v=u_k-u_{k+1}$, then we are in the situation of Corollary 4.3, which gives $|I|\ge \pi$.\end{proof} 

\begin{prop}
$\{u_1>u_2\}\cap\mathbb{S}^1$ consists of a single component of length exactly $\pi$. 
\end{prop}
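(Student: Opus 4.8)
The plan is to combine Proposition 4.4 (applied to the top pair $k=1$) with a global degree/length-counting argument using the null average assumption. First I would observe that $\{u_1>u_2\}\cap\mathbb{S}^1$ is open and, by the lower bound on component lengths (the maximum principle remark preceding Proposition 4.4), consists of finitely many open arcs $I_1,\dots,I_m$. On each such component $I_\ell$, the set $\{u_1=u_2\}$ holds on $\partial I_\ell$, and since $u_1$ is the topmost membrane we have $\Delta u_1 = f_1$ constantly on $I_\ell$; hence every $I_\ell$ has Property P, so Proposition 4.4 gives $|I_\ell|\ge\pi$. Since the arcs are disjoint and $|\mathbb{S}^1|=2\pi$, there can be at most two of them, and if there are two then each has length exactly $\pi$ and they are antipodal half-circles.

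Next I would rule out the two-component case and pin the single-component case to length exactly $\pi$. For this, use (NA): $\sum_j u_j\equiv 0$, so by Remark 2.4 $\Delta(\sum u_j)=\sum f_j$ is a fixed constant $S$, while each blow-up profile is homogeneous of degree $2$, so $\sum u_j$ is a homogeneous degree-$2$ function with constant Laplacian $S$ that vanishes identically — forcing $S=0$, i.e. $\sum f_j=0$. More useful is to track $u_1-u_2$ directly: it is homogeneous of degree $2$, nonnegative, vanishes with its gradient on the complement of $\bigcup I_\ell$, and on each $I_\ell$ it solves $\Delta(u_1-u_2) = f_1 - \Delta u_2$, where $\Delta u_2$ is non-increasing then non-decreasing along $I_\ell$ exactly as in the proof of Proposition 4.4. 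In the ``$\alpha=\pi$'' borderline case Corollary 4.3 tells us $g=f_1-\Delta u_2$ must be constant on that arc, so $u_1-u_2$ is a pure $a(x\cdot e)_+^2$ on the corresponding half-plane. If there were two antipodal arcs $I_1,I_2$ of length $\pi$, then on the line separating them $u_1-u_2$ and its gradient vanish, and on each side $u_1-u_2$ is a one-dimensional function of the signed distance to that line; this says $\{u_1=u_2\}\cap\mathbb{S}^1$ is just two antipodal points, contradicting connectedness of $u_1$ and $u_2$ (Definition 4.1 requires the contact set to contain a nondegenerate arc). Hence $m=1$.

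Finally, with a single component $I$ of length $|I|\ge\pi$, I would show $|I|=\pi$. Apply Corollary 4.3 again on the complementary closed arc $I^c$ of length $2\pi - |I| \le \pi$: on $I^c$ we have $u_1=u_2$, and here I would instead run the argument one membrane down — consider the first index where the stack $u_1=u_2=\dots$ breaks, or directly exploit that on $I^c$ the function $u_1-u_2\equiv 0$ together with the fact that some other difference $u_s-u_{s+1}$ with $s\ge 2$ must be positive on a subarc of $I^c$ (by connectedness, $\{u_1=u_2\}$ is not merely isolated points, so $I^c$ has nonempty interior and carries the full multiplicity structure). More cleanly: the complement $I^c$, being where $u_1=u_2$, is itself swept by an interval with Property P for the pair $(1,2)$ viewed from the other side only if $|I^c|\ge\pi$; but actually the cleanest route is to use that the whole circle is $\{u_1>u_2\}\cap\mathbb{S}^1$ (arc $I$) together with $\{u_1=u_2\}\cap\mathbb{S}^1$ (arc $I^c$, nondegenerate by connectedness), and to apply Proposition 4.4's mechanism to $u_1-u_2$ on $I^c$'s ``mirror'' — concluding $|I^c|\ge\pi$ as well, hence $|I|=|I^c|=\pi$.

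\textbf{Main obstacle.} The delicate point is the last step: upgrading $|I|\ge\pi$ to $|I|=\pi$, equivalently showing the contact arc $I^c$ also has length at least $\pi$. Proposition 4.4 as stated applies to intervals where $\Delta u_k$ is \emph{constant}, which is automatic for the top pair on $I$ but not obviously available on $I^c$ (there $u_2$ may contact lower membranes in a complicated way). I expect to need either a symmetric version of the Property-P argument run ``from the bottom'' (using that $\sum u_j = 0$ makes the configuration symmetric under reversing the ordering and negating), or a direct convexity argument: on $I^c$ the difference $u_1-u_2$ is zero, so looking at $u_2 = -\tfrac12\sum_{j\ne 2}u_j/\!\!\!\dots$ — more precisely, running the whole scheme on $-u_N$ versus $-u_{N-1}$, which by the symmetry $u_j \mapsto -u_{N+1-j}$ of the null-average constant-forcing problem reduces to the case already proved. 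Making this symmetry precise and checking it interacts correctly with (ND) is the part that requires care.
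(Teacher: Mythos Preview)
Your first two steps are fine and match the paper: each component of $\{u_1>u_2\}\cap\mathbb{S}^1$ has Property P (since $\Delta u_1=f_1$ there), so Proposition 4.4 gives length $\ge\pi$; and two components of length $\pi$ would leave only two isolated contact points, contradicting connectedness.

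The genuine gap is in the last step, upgrading $|I|\ge\pi$ to $|I|=\pi$. On $I^c$ you have $u_1-u_2\equiv 0$, so there is nothing to feed into Corollary 4.3 for that pair, and you correctly flag this. But neither of your proposed workarounds closes the gap. Going ``one membrane down'' on $I^c$ would produce an interval with Property P for some lower pair \emph{inside} $I^c$, and Proposition 4.4 would then force that subinterval (not $I^c$ itself) to have length $\ge\pi$ --- this is a contradiction only if such a subinterval exists, which is exactly what the paper exploits, but you have not set it up. Your symmetry $u_j\mapsto -u_{N+1-j}$ does give a valid $N$-membranes problem satisfying (ND), but applying the argument to it only tells you that $\{u_{N-1}>u_N\}\cap\mathbb{S}^1$ is a single arc of length $\ge\pi$; there is no a priori relation between that arc and $I^c$, so you cannot conclude $|I^c|\ge\pi$ from it. (That the two arcs ultimately coincide is the content of Theorem 1.4, which comes \emph{after} this proposition.)

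The idea you are missing is a polynomial extension. On $\Gamma_I$ the function $u_1$ is homogeneous of degree $2$ with constant Laplacian $f_1$, hence a quadratic polynomial; call $\tilde u_1$ the same polynomial, now viewed on $\Gamma_{I^c}$. Then $\tilde u_1$ and $u_2$ (which equals $u_1$ on $I^c$) agree to first order on $\partial\Gamma_{I^c}$, and $v:=\tilde u_1-u_2$ satisfies $\Delta v=f_1-\Delta u_2\ge 0$ on $I^c$. If $|I|>\pi$ then $|I^c|<\pi$, so by Proposition 4.4 no subinterval of $I^c$ can have Property P; repeating the nested-contact argument from the proof of Proposition 4.4 shows $\{u_2=u_s\}\cap I^c$ is connected for each $s\ge 3$, hence $\Delta u_2$ is non-increasing then non-decreasing on $I^c$. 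Now Corollary 4.3 applies to $v$ on $I^c$ and gives $|I^c|\ge\pi$, the desired contradiction. The trick is that $\tilde u_1$, unlike $u_1$, need not equal $u_2$ on the interior of $I^c$, so $v$ is nontrivial (indeed $\Delta v>0$ somewhere since $\Delta u_2\le\tfrac12(f_1+f_2)<f_1$ on $I^c$).
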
 

\begin{proof}
By Proposition 3.11, $\{u_1>u_2\}\cap\mathbb{S}^1$ is nonempty. Meanwhile, since on $\{u_1>u_2\}$, $\Delta u_1=f_1$, each component of $\{u_1>u_2\}\cap\mathbb{S}^1$ has Property P, and hence is of length no less than $\pi.$

In particular, there is exactly one component of  $\{u_1>u_2\}\cap\mathbb{S}^1$, for otherwise $\{u_1>u_2\}$ would consists of two components, each of length $\pi$, contradicting the connectedness of $u_1$ and $u_2$.  

Denote this single component of $\{u_1>u_2\}$ by $I$. We just need to show $|I|\le \pi$. 

Suppose, on the contrary, that $|I|>\pi.$ Then the previous proposition implies that  $I^c$ cannot contain any interval with Property P, and a similar argument as in its proof gives the connectedness of $\{u_2=u_s\}\cap I^c$ for all $s\ge 3$, which shows $\Delta u_2$ is non-increasing on a subinterval of $I^c$, and then non-decreasing in the rest of $I^c$. 

Now $u_1$ is homogeneous and has constant Laplacian in $I$,  it is a quadratic polynomial in $\Gamma_I$. By taking the same polynomial, we get a function $\tilde{u}_1$ in $I^c$ with the same Laplacian.  Moreover, $\tilde{u}_1$ and its gradient agree with $u_1$ and $\nabla u_1$ on $\partial\Gamma_{I^c}$.

In particular, if we take $v=\tilde{u}_1-u_2$, then Corollary 4.3 applies and gives $|I^c|\ge\pi$. \end{proof} 

With all these preparations, we can finally give 
\begin{proof} of Theorem 1.4.

Now that $\{u_1>u_2\}\cap\mathbb{S}^1$ consists of a single component $I$ of length $\pi$, a similar argument as in the proof of Proposition 4.4 shows $\Delta u_2$ is non-increasing on a subinterval of $I$, and then non-decreasing in the rest of $I$. 

Thus one can apply Corollary 4.3 to $u_1-u_2$ and conclude $\Delta u_2$ is constant on $I$.  A symmetric argument shows that $\Delta u_2$ is constant on $I^c$. This is only possible if $\partial\{u_2>u_3\}$ coincides with $\partial\{u_1>u_{2}\}$.

It is obvious how to inductively repeat this argument and show that $\partial\{u_k>u_{k+1}\}$ aligns with the same line for all $k$. Suppose the unit normal to this line is $e$, then a similar argument as in the proof of Corollary 4.3  gives $u_k-u_{k+1}$ only depends on $x\cdot e$. From there, homogeneity gives the desired result.  \end{proof} 

As a simple illustration of the result, we give a complete classification of homogeneous solution for the 3-membranes problem in the plane:

\begin{ex}
Let $\{u_j\}_{j=1,2,3}$ be solution to the 3-membranes problem in $\mathbb{R}^2$ with forcing terms $f_1=1$, $f_2=0$, and $f_3=-1$. Suppose they are homogeneous of degree 2 and satisfy (NA). 

If $0\in\partial\{u_1>u_2\}\cap\partial\{u_2>u_3\}$, then they fall into one of the following categories:

i) For some $e\in\mathbb{S}^1$, $u_1(x)=\frac{1}{2}(x\cdot e)_+^2$, $u_2(x)=0$, and $u_3(x)=-\frac{1}{2}(x\cdot e)_+^2$; 

ii) For some $e\in\mathbb{S}^1$, $u_1(x)=\frac{1}{2}(x\cdot e)_+^2+\frac{1}{4}(x\cdot e)_-^2$, $u_2(x)=-\frac{1}{4}(x\cdot e)_+^2+\frac{1}{4}(x\cdot e)_-^2$, and $u_3(x)=-\frac{1}{4}(x\cdot e)_+^2-\frac{1}{2}(x\cdot e)_-^2$;

iii) For some $e\in\mathbb{S}^1$ and $A\in S_{2,2}$ with $tr(A)=1$, $u_1(x)=\frac{1}{4}[(x\cdot e)_+^2+\langle Ax,x\rangle]$, $u_2(x)=\frac{1}{4}[-(x\cdot e)_+^2+\langle Ax,x\rangle]$, and $u_3(x)=-\frac{1}{2}\langle Ax,x\rangle$;

iv) For some $e\in\mathbb{S}^1$ and $A\in S_{2,2}$ with $tr(A)=1$, $u_1(x)=\frac{1}{2}\langle Ax,x\rangle$,  $u_2(x)=\frac{1}{4}[(x\cdot e)_+^2-\langle Ax,x\rangle]$ and $u_3(x)=\frac{1}{4}[-(x\cdot e)_+^2-\langle Ax,x\rangle]$;

v) For some $A_j\in S_{2,2}$ with $tr(A_1)=1$, $tr(A_2)=0$ and $tr(A_3)=-1$, $u_j(x)=\frac{1}{2}\langle A_jx,x\rangle$ for $j=1,2,3$. 

Moreover, solutions belong to these categories have different  Weiss-type energies in the order: $i)<ii)<iii)=iv)<v).$

\end{ex}

\section*{Acknowledgements}
H. Y. would like to thank Dennis Kriventsov for many fruitful discussions concerning this project.



\end{document}